\newtheorem{thm}{Theorem}[section]
\newtheorem{prop}[thm]{Proposition}
\theoremstyle{definition}
\newtheorem{remark}[thm]{Remark}
\numberwithin{equation}{section}
\DeclareMathOperator{\GL}{GL}
\DeclareMathOperator{\Gal}{Gal}
\begin{document}

\title{Torsion groups of elliptic curves over quadratic fields $\mathbb{Q}(\sqrt{d}),$ $0<d<100$}

\author{Antonela Trbovi\'c}
\address{Department of Mathematics, University of Zagreb, Bijeni\v{c}ka
cesta 30, 10000 Zagreb, Croatia}
\email{antonela.trbovic@gmail.com}

\begin{abstract}
We prove results towards classifying the possible torsion
subgroups of elliptic curves over quadratic fields $\mathbb{Q}(\sqrt{d})$, where $0<d<100$ is a square-free integer, and obtain a complete classification for 49
out of 60 such fields. Over the remaining
11 quadratic fields, we cannot rule out the possibility of the group $\mathbb{Z}/16\mathbb{Z}$
appearing as a torsion group of an elliptic curve.
\end{abstract}

\subjclass[2000]{Primary 11G05}

\keywords{torsion groups, elliptic curves, quadratic fields.}

\thanks{The author was supported by the QuantiXLie Centre of Excellence, a project co-financed by the Croatian
	Government and European Union through the European Regional Development Fund - the Competitiveness and
	Cohesion Operational Programme (Grant KK.01.1.1.01.0004).
}

\maketitle

\section{Introduction}

For an elliptic curve $E$ defined over a number field $K,$ let $E(K)$ be the set of all $K-$rational points on $E.$ By the Mordell-Weil theorem and the structure theorem for finitely generated abelian groups, we know that $E(K)$ is isomorphic to $E(K)_{tors} \oplus \mathbb{Z}^r,$ where $r$ is a non-negative integer and $E(K)_{tors}$ is the torsion subgroup.

In the case of $K=\mathbb{Q},$ Mazur's theorem \cite{BM} gives us all possible torsion subgroups of $E(\mathbb{Q})$:
$$\mathbb{Z}/n\mathbb{Z}, \:\:\: n=1,...,10,12,$$
$$\mathbb{Z}/2\mathbb{Z} \oplus \mathbb{Z}/2n\mathbb{Z}, \:\:\: n=1,2,3,4.$$

We also have a similar result by Kamienny, Kenku and Momose \cite{SK,MAKFM} concerning possible torsion subgroups of elliptic curves defined over any quadratic field, which are the following 26 groups:
$$\mathbb{Z}/n\mathbb{Z}, \:\:\: n=1,...,16,18,$$
$$\mathbb{Z}/2\mathbb{Z} \oplus \mathbb{Z}/2n\mathbb{Z}, \:\:\: n=1,...,6,$$
$$\mathbb{Z}/3\mathbb{Z} \oplus \mathbb{Z}/3n\mathbb{Z}, \:\:\: n=1,2,$$
$$\mathbb{Z}/4\mathbb{Z} \oplus \mathbb{Z}/4\mathbb{Z}.$$
While this theorem settles the question on what are the possibilities for the torsion subgroup over all quadratic fields, we are interested in what happens when we fix a quadratic field. In order to see what happens over a fixed field, one would have to go through each of the 26 groups mentioned above and check whether that is a possible torsion subgroup or not.

We are going to see why every group mentioned in Mazur's theorem has to appear as a possible torsion subgroup over all quadratic fields, and what happens with the groups $\mathbb{Z}/3\mathbb{Z}\oplus \mathbb{Z}/3n\mathbb{Z}, \: n=1,2, \: \mathbb{Z}/4\mathbb{Z}\oplus \mathbb{Z}/4\mathbb{Z}.$
For the rest of the groups we will follow the methods described in \cite{SKFN}.

From now on, let $K$ be a fixed quadratic field.
Let $Y_1(m,n)$ be the affine modular curve whose every $K-$rational point corresponds to an isomorphism class of an elliptic curve together with an $m-$torsion point $P_m\in E(K)$ and an $n-$ torsion point $P_n\in E(K)$ such that $P_m$ and $P_n$ generate a subgroup isomorphic to $\mathbb{Z}/m\mathbb{Z} \oplus \mathbb{Z}/n\mathbb{Z},$ and let $X_1(m,n)$ be its compactification (the same curve with adjoined cusps). We denote $X_1(1,n)$ by $X_1(n).$

More precisely, what we need to do in order to determine whether an elliptic curve with torsion $\mathbb{Z}/n\mathbb{Z} \oplus  \mathbb{Z}/m\mathbb{Z}$ over $K$ exists, for the rest of the 26 groups, is to determine whether there are $K-$rational points on $X_1(m,n)$ that are not cusps. These modular curves are either elliptic or hyperelliptic.

If the modular curve $X_1(m,n)$ is elliptic, we compute its rank. If the rank is positive, there are infinitely many elliptic curves over $ K $ with the given torsion subgroup, as the number of cusps is finite. If the rank is 0, we have to compute the torsion subgroup and check whether any torsion point corresponds to a $ K- $rational point on the modular curve that is not a cusp.

If the modular curve $X_1(m,n)$ is hyperelliptic, we compute the rank of the Jacobian of the curve. If the rank is 0, we also have to check whether any torsion point arises from a $ K- $rational point on the modular curve that is not a cusp. If the rank is positive, the problem becomes more difficult. More about this can be found in \cite{SKFN}.

\section{Groups from Mazur's theorem and \\ $\mathbb{Z}/3\mathbb{Z}\oplus \mathbb{Z}/3n\mathbb{Z}, \: n=1,2, \: \mathbb{Z}/4\mathbb{Z}\oplus \mathbb{Z}/4\mathbb{Z}$}
\label{sec:two}

In this section, we are going to show that every group mentioned in Mazur's theorem has to appear as a possible torsion group over any quadratic field $K.$

Let $E$ be an elliptic curve and denote by $\rho_{E,n}:\Gal(K/\mathbb{Q}) \rightarrow \GL_2(\mathbb{Z}/n \mathbb{Z})$ the $mod \: n$ Galois representation attached to $E.$

If ${P, P'}$ is a basis for $E[n],$ the subgroup of $E$ of points of order $n,$ and if $P$ is a point of order $n$ in $E(\mathbb{Q}),$ then $\rho_{E,n}(\sigma)=\begin{bmatrix} 
    1       & b \\
    0       & d
\end{bmatrix}, \: b, \in\mathbb{Z}/n\mathbb{Z}, d \in (\mathbb{Z}/n\mathbb{Z})^{\times}$  for every $\sigma \in \Gal(K/\mathbb{Q}),$ with respect to the basis $\{P,P'\}.$

We define a subgroup of $\GL_2(\mathbb{Z}/n\mathbb{Z}),$
$$\Gamma_1(n)=\left\{\begin{bmatrix}
1 & b \\ 0 & d
\end{bmatrix}: b \in \mathbb{Z}/n\mathbb{Z}, d \in
(\mathbb{Z}/n\mathbb{Z})^{\times}\right\},$$
which corresponds to $ X_1(n), $ i.e. the $mod \:n $ representations of elliptic curves parameterized by the points on $ X_1(n) $ are elements in $ G(n), $ with an appropriate choice of basis. Similarly, we define a subgroup 
 $$\Gamma_1(2,2n)=\left\{\begin{bmatrix}
1 & b \\ 0 & d
\end{bmatrix}: b \in 2\mathbb{Z}/2n\mathbb{Z}, d \in (\mathbb{Z}/2n\mathbb{Z})^{\times}\right\},$$ which corresponds to $ X_1(2,2n), $ in the sense described above.

For any of the groups $\mathbb{Z}/n\mathbb{Z}$ or $\mathbb{Z}/2\mathbb{Z} \oplus \mathbb{Z}/2n\mathbb{Z}$ appearing in Mazur's theorem, we
have that the corresponding modular curve $X_1(n)$ or $X_1(2,2n)$,
respectively, is of genus 0. Now, with $ X_G=X_1(n) $ or $ X_G=X_1(2,2n) $ in \cite[Lemma 3.5]{DJZ}, using the same arguments as in the proof
of  the lemma, but taking the base field to be a quadratic field $ K $
instead of $\mathbb{Q}$, we have that there are infinitely many elliptic curves $E$ over $K$
such that $\rho_{E,n}(\Gal (K/\mathbb{Q}))$ is conjugate (not just contained) in $\GL_2(\mathbb{Z}/n\mathbb{Z})$
to $\Gamma_1(n)$ or $\Gamma_1(2,2n)$, respectively, proving our claim.

Now, we will focus on the groups
$$\mathbb{Z}/3\mathbb{Z}\oplus\mathbb{Z}/3n\mathbb{Z}, \:\: n=1,2,$$
$$\mathbb{Z}/4\mathbb{Z}\oplus\mathbb{Z}/4\mathbb{Z}.$$
From the properties of the Weil pairing, we know that
$\mathbb{Z}/n\mathbb{Z} \times \mathbb{Z}/n\mathbb{Z} \subset E(K)$ only if $\mathbb{Q}(\zeta_n) \subset K.$ 

Hence, $\mathbb{Z}/3\mathbb{Z} \times \mathbb{Z}/3n\mathbb{Z} \subset E(K),$ $n=1,2,$ only when $K\supset\mathbb{Q}(\zeta_3)=\mathbb{Q}(\sqrt{-3})$ and $\mathbb{Z}/4\mathbb{Z} \times \mathbb{Z}/4\mathbb{Z} \subset E(K)$ only if $K \supset \mathbb{Q}(i).$ Moreover, the mentioned groups are the only groups, except the ones from the Mazur's theorem, that appear over $\mathbb{Q}(\sqrt{-3})$ and $\mathbb{Q}(i),$ respectively \cite{FN1,FN2}.

\section{Torsion over $\mathbb{Q}(\sqrt{17})$}

We will now demonstrate how to carry out the mentioned methods over the quadratic field $\mathbb{Q}(\sqrt{17}).$ We chose $\mathbb{Q}(\sqrt{17})$ because $17 \equiv 1 \pmod{8},$ and the significance of this relation will be clear later on.

As stated above, all groups from Mazur's theorem are possible torsion subgroups over $\mathbb{Q}(\sqrt{17}),$ and the groups $\mathbb{Z}/3\mathbb{Z}\oplus\mathbb{Z}/3n\mathbb{Z}, \: n=1,2,$ $\mathbb{Z}/4\mathbb{Z}\oplus\mathbb{Z}/4\mathbb{Z}$ are not.

For the rest of the groups we will follow the methods from \cite{SKFN} described above.

The equations for $X_1(m,n)$ can be found in \cite{HB,FPR}.

All computations in the following propositions will be done in Magma \cite{MAGMA}. Computations for this paper can be found  \href{https://web.math.pmf.unizg.hr/~atrbovi/magma.txt}{here}.

\begin{prop}\label{prop1}
There are infinitely many elliptic curves with torsion $\mathbb{Z}/11\mathbb{Z}$ over $\mathbb{Q}(\sqrt{17}).$
\end{prop}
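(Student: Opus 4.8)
The plan is to exhibit the modular curve $X_1(11)$ explicitly and show it has infinitely many $\mathbb{Q}(\sqrt{17})$-rational points, none of which are cusps. Recall that $X_1(11)$ has genus $1$, so it is an elliptic curve once we fix a rational point (say one of its cusps, all of which are $\mathbb{Q}$-rational here). Concretely, using the model from \cite{HB,FPR}, $X_1(11)$ is isomorphic over $\mathbb{Q}$ to the elliptic curve of conductor $121$ with Cremona label \texttt{121b1}, whose Mordell--Weil group over $\mathbb{Q}$ is $\mathbb{Z}/5\mathbb{Z}$, consisting entirely of cusps. So over $\mathbb{Q}$ there is no elliptic curve with a rational $11$-torsion point, consistent with Mazur's theorem; the content of the proposition is that base-changing to $\mathbb{Q}(\sqrt{17})$ produces positive rank.

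The key steps, in order: (1) Load the equation of $X_1(11)$ from \cite{HB,FPR} and put it in Weierstrass form $E$ over $\mathbb{Q}$. (2) Base-change $E$ to $K=\mathbb{Q}(\sqrt{17})$ and compute the rank of $E(K)$ in Magma \cite{MAGMA}, e.g.\ via \texttt{RankBounds} or \texttt{MordellWeilRankBounds} together with a descent; the expectation is that $\operatorname{rank} E(K)\geq 1$. Equivalently, since $\operatorname{rank} E(K) = \operatorname{rank} E(\mathbb{Q}) + \operatorname{rank} E^{(17)}(\mathbb{Q})$ where $E^{(17)}$ is the quadratic twist by $17$, and $\operatorname{rank} E(\mathbb{Q})=0$, it suffices to show the twist $E^{(17)}$ has positive rank over $\mathbb{Q}$ — this is the cleaner computation and is presumably why $d=17$ (and the congruence $17\equiv 1\pmod 8$, which controls ramification) was singled out. (3) Having positive rank, $E(K)$ is infinite, hence $X_1(11)(K)$ is infinite; since the set of cusps is finite, all but finitely many of these points are non-cuspidal. (4) Each non-cuspidal $K$-rational point of $X_1(11)$ gives an elliptic curve over $K$ equipped with a $K$-rational point of order $11$, i.e.\ with $\mathbb{Z}/11\mathbb{Z}$ in its torsion; and since $\mathbb{Z}/11\mathbb{Z}$ is maximal among the cyclic groups allowed by the Kamienny--Kenku--Momose list that contain it (the only larger possibilities would be $\mathbb{Z}/22\mathbb{Z}$, which does not occur, or a group properly containing $\mathbb{Z}/11\mathbb{Z}$ — none exist in the list), the torsion is exactly $\mathbb{Z}/11\mathbb{Z}$ for these curves. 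Distinct non-cuspidal points give non-isomorphic curves, so we get infinitely many.

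The main obstacle is step (2): one must actually produce a point of infinite order on $E(K)$ (equivalently on the twist $E^{(17)}$ over $\mathbb{Q}$), and a rank \emph{lower} bound requires either exhibiting an explicit point of infinite order — which can be done by a point search and then checking it is non-torsion by reducing modulo several primes — or a successful descent computation showing the rank is not $0$. A naive search may not immediately find a generator if its height is large, so the fallback is a $2$-descent (or Heegner-point argument) in Magma to certify positivity of the rank. The remaining steps are routine: identifying the finitely many cusps on the chosen model and confirming they are the torsion points is a direct calculation, and the bookkeeping that non-cuspidal points yield curves with torsion exactly $\mathbb{Z}/11\mathbb{Z}$ is immediate from the classification quoted in the introduction.
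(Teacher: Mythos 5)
Your proposal takes essentially the same route as the paper: it computes (via Magma) that $X_1(11)$, base-changed to $\mathbb{Q}(\sqrt{17})$, has positive Mordell--Weil rank (the paper finds rank $1$ and exhibits the point $\bigl(\tfrac{1}{8}(-\sqrt{17}+1),\tfrac{1}{16}(\sqrt{17}+7)\bigr)$, equivalently one can twist by $17$ as you suggest), and then concludes from the finiteness of the cusps that infinitely many non-cuspidal points, hence infinitely many curves with torsion $\mathbb{Z}/11\mathbb{Z}$, exist. One peripheral correction: the model $y^2-y=x^3-x^2$ of $X_1(11)$ has conductor $11$ (isogeny class 11a), not $121$, and only $5$ of its $10$ cusps are $\mathbb{Q}$-rational, though neither slip affects your argument.
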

\begin{proof}
To show this, we have to prove that the modular curve $X_1(11)$ defined over $\mathbb{Q}(\sqrt{17})$ has infinitely many points. It will suffice to see that the rank is positive, since the number of cusps on $X_1(11)$ is finite. For the modular curve $$X_1(11): \:\: y^2-y=x^3-x^2,$$ we compute $$rank(X_1(11)(\mathbb{Q}(\sqrt{17})))=1$$ in Magma. Now we can conclude that there are infinitely many elliptic curves with torsion $\mathbb{Z}/11\mathbb{Z}$ over $\mathbb{Q}(\sqrt{17}).$

We can also compute a generator of the group $X_1(11)(\mathbb{Q}(\sqrt{17}))$ (modulo the torsion subgroup), which is $$\left(\dfrac{1}{8}(-\sqrt{17}+1),\dfrac{1}{16}(\sqrt{17}+7)\right).$$
\end{proof}

\begin{prop} There are infinitely many elliptic curves with torsion $\mathbb{Z}/14\mathbb{Z}$ over $\mathbb{Q}(\sqrt{17}).$
\end{prop}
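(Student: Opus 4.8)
The plan is to mimic the proof of Proposition~\ref{prop1}, since the modular curve $X_1(14)$ is also elliptic. First I would write down an explicit model for $X_1(14)$ over $\mathbb{Q}$; from \cite{HB,FPR} this curve has genus $1$ and can be put in Weierstrass form (for instance $y^2 + xy + y = x^3 - x$, which is the curve \texttt{14a4} / \texttt{X\_1(14)} in the standard tables, having exactly six cusps and no non-cuspidal rational points over $\mathbb{Q}$, consistent with Mazur's theorem). Base-changing to $K = \mathbb{Q}(\sqrt{17})$, I would then compute in Magma the Mordell--Weil rank of $X_1(14)(K)$ and exhibit that it equals $1$ (equivalently, that the rank jumps by $1$ relative to $\mathbb{Q}$, which is detected by a positive rank of the quadratic twist $X_1(14)^{(17)}$ over $\mathbb{Q}$).

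Once a point of infinite order is found, the key remaining point is the same bookkeeping as in Proposition~\ref{prop1}: the set of cusps on $X_1(14)$ is finite (six cusps), so a positive rank forces infinitely many non-cuspidal $K$-rational points, and every such point corresponds to an elliptic curve $E/K$ together with a point of order $14$, hence with $\mathbb{Z}/14\mathbb{Z} \hookrightarrow E(K)_{tors}$. Since $\mathbb{Z}/14\mathbb{Z}$ is a group allowed by the Kamienny--Kenku--Momose classification, and since distinct non-cuspidal points give non-isomorphic curves up to finitely many collisions, this yields infinitely many elliptic curves over $\mathbb{Q}(\sqrt{17})$ with torsion subgroup containing — and in fact, generically equal to — $\mathbb{Z}/14\mathbb{Z}$. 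For definiteness I would also record an explicit generator of $X_1(14)(K)$ modulo torsion, exactly as was done for $X_1(11)$.

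I do not expect any genuine obstacle here: the only substantive step is the rank computation, and for an elliptic curve of such small conductor the quadratic twist by $17$ has small enough conductor and regulator that Magma's \texttt{Rank} / \texttt{MordellWeilGroup} (or a descent via \texttt{DescentInformation}) will return an unconditional answer quickly. The one thing to be careful about is confirming that the rank is \emph{exactly} $1$ rather than merely positive — but for the conclusion ``infinitely many'' we only need positivity, so even a provable lower bound of $1$ on the rank suffices. Thus the proof is a direct transcription of the $X_1(11)$ argument with the model and the rank computation updated to $X_1(14)$.
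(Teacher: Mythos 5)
Your proposal matches the paper's proof: the paper uses the same model $X_1(14): y^2+xy+y=x^3-x$, computes $rank(X_1(14)(\mathbb{Q}(\sqrt{17})))=1$ in Magma, exhibits a point of infinite order, and concludes as in Proposition~\ref{prop1} that finiteness of the cusps gives infinitely many elliptic curves over $\mathbb{Q}(\sqrt{17})$ with torsion $\mathbb{Z}/14\mathbb{Z}$. Your additional remarks (rank positivity suffices, detection via the quadratic twist, torsion exactly $\mathbb{Z}/14\mathbb{Z}$ by the quadratic classification) are correct refinements of the same argument rather than a different route.
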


\begin{proof}
For the modular curve $$X_1(14): \: \: y^2+xy+y=x^3-x$$ we compute $$rank(X_1(14)(\mathbb{Q}(\sqrt{17})))=1.$$ Using similar reasoning to the one in Proposition $\ref{prop1}$ we conclude that there are infinitely many elliptic curves with torsion $\mathbb{Z}/14\mathbb{Z}$ over $\mathbb{Q}(\sqrt{17}).$

Also, one point of infinite order is given by $$\left(\dfrac{1}{2}(\sqrt{17}+3),-\sqrt{17}-5\right).$$
\end{proof}

\begin{prop} 
$\mathbb{Z}/15\mathbb{Z}$ cannot be a torsion group of an elliptic curve over $\mathbb{Q}(\sqrt{17}).$
\end{prop}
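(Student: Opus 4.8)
The plan is to use the modular curve $X_1(15)$, which is known to be an elliptic curve. First I would recall its explicit model (available in \cite{HB,FPR}); $X_1(15)$ has genus $1$, so over $\mathbb{Q}$ it is the elliptic curve of conductor $15$, and it has exactly four cusps, all rational. Since the rank of $X_1(15)$ over $\mathbb{Q}$ is $0$ and its torsion is $\mathbb{Z}/4\mathbb{Z}$ (generated by the cusps, with no noncuspidal rational points — this reproves that $\mathbb{Z}/15\mathbb{Z}$ does not occur over $\mathbb{Q}$), the key question is what happens after base change to $K=\mathbb{Q}(\sqrt{17})$.

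The main step is to compute, in Magma, the Mordell–Weil group $X_1(15)(K)$. If $\mathrm{rank}(X_1(15)(K)) = 0$, then $X_1(15)(K)$ is finite, and I would explicitly list all its points by computing the torsion subgroup of the base-changed curve. Then I would check each of these finitely many $K$-rational points against the known cusps: if every $K$-rational point is a cusp, then there is no elliptic curve over $K$ with a point of order $15$, proving the proposition. Concretely, one expects the torsion of $X_1(15)$ to stay $\mathbb{Z}/4\mathbb{Z}$ (or grow by a controlled amount) over $\mathbb{Q}(\sqrt{17})$, with all torsion points accounted for by cusps.

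The step I expect to be the main obstacle is verifying that the rank does not jump to a positive value over $\mathbb{Q}(\sqrt{17})$. The rank of $X_1(15)$ over a quadratic field $\mathbb{Q}(\sqrt{d})$ equals the rank over $\mathbb{Q}$ plus the rank of the quadratic twist $X_1(15)^{(d)}$ over $\mathbb{Q}$; so the computation reduces to checking that this twist, an explicit elliptic curve over $\mathbb{Q}$, has rank $0$. This is a finite computation that Magma can perform (via descent / \texttt{MordellWeilShaInformation} or analytic rank), and one expects it to return rank $0$ — this is precisely the phenomenon that makes $\mathbb{Q}(\sqrt{17})$ behave, for this group, like a "generic" quadratic field rather than one of the exceptional fields where new torsion appears. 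Once the rank is confirmed to be $0$, the remaining bookkeeping (computing torsion, matching points to cusps) is routine.
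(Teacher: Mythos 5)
Your proposal is correct and follows essentially the same route as the paper: show $\mathrm{rank}(X_1(15)(\mathbb{Q}(\sqrt{17})))=0$ (the paper computes this directly in Magma, which amounts to your rank-plus-twist decomposition), then verify $X_1(15)(\mathbb{Q}(\sqrt{17}))\cong\mathbb{Z}/4\mathbb{Z}$ and that these four points are exactly the cusps, so $Y_1(15)(\mathbb{Q}(\sqrt{17}))=\emptyset$. The only difference is bookkeeping: the paper identifies the cusps via the explicit equation for their $x$-coordinates rather than appealing to the rational cusps of the conductor-$15$ curve.
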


\begin{proof}
For the modular curve $$X_1(15): \:\: y^2+xy+y=x^3+x^2,$$ we compute $$rank(X_1(15)(\mathbb{Q}(\sqrt{17})))=0.$$ 
Hence, we only have to show that $$Y_1(15)(\mathbb{Q}(\sqrt{17}))=\emptyset,$$ i.e. that there are only cusps in $X_1(15)(\mathbb{Q}(\sqrt{17})).$

The $x-$coordinates of the cusps on $X_1(15)$ satisfy the equation $$x(x+1)(x^4+3x^3+4x^2+2x+1)(x^4-7x^3-6x^2+2x+1)=0.$$
So, the set of all cusps is $$X_1(15)(\mathbb{Q}(\sqrt{17})) \backslash Y_1(15)(\mathbb{Q}(\sqrt{17}))=\{ O, (0,0), (0,-1), (-1,0) \}.$$
We compute $$X_1(15)(\mathbb{Q}(\sqrt{17})) \cong \mathbb{Z}/4\mathbb{Z},$$ so now it is obvious that all points on the modular curve $X_1(15)$ defined over $\mathbb{Q}(\sqrt{17})$ are cusps. Therefore, $Y_1(15)(\mathbb{Q}(\sqrt{17}))=\emptyset,$ so there are no elliptic curves with torsion $\mathbb{Z}/15\mathbb{Z}$ over $\mathbb{Q}(\sqrt{17}).$
\end{proof}

\begin{prop} There are infinitely many elliptic curves with torsion $\mathbb{Z}/2\mathbb{Z} \oplus \mathbb{Z}/10\mathbb{Z}$ over $\mathbb{Q}(\sqrt{17}).$
\end{prop}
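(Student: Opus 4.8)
\emph{Proof proposal.} The plan is to argue exactly as in the elliptic case outlined in the introduction and carried out in Proposition \ref{prop1}. The relevant object is the modular curve $X_1(2,10)$, whose non-cuspidal $\mathbb{Q}(\sqrt{17})$-points classify elliptic curves over $\mathbb{Q}(\sqrt{17})$ equipped with a $\mathbb{Z}/2\mathbb{Z}\oplus\mathbb{Z}/10\mathbb{Z}$ torsion structure. Among the curves $X_1(2,2n)$ this is the first one that is not of genus $0$: it is an elliptic curve. So the recipe is to take the model of $X_1(2,10)$ from \cite{HB,FPR}, put it in Weierstrass form, and compute the Mordell--Weil rank of $X_1(2,10)(\mathbb{Q}(\sqrt{17}))$ in Magma.

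First I would load that model over $\mathbb{Q}(\sqrt{17})$ and run the rank computation; the expectation is that the rank is positive (in fact $1$). Granting this, since $X_1(2,10)$ has only finitely many cusps, all but finitely many of the infinitely many points of $X_1(2,10)(\mathbb{Q}(\sqrt{17}))$ are non-cuspidal, and each such point yields an elliptic curve over $\mathbb{Q}(\sqrt{17})$ containing $\mathbb{Z}/2\mathbb{Z}\oplus\mathbb{Z}/10\mathbb{Z}$ in its torsion. The only group in the Kamienny--Kenku--Momose list containing $\mathbb{Z}/2\mathbb{Z}\oplus\mathbb{Z}/10\mathbb{Z}$ is that group itself, so the torsion of each such curve is exactly $\mathbb{Z}/2\mathbb{Z}\oplus\mathbb{Z}/10\mathbb{Z}$. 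Since the forgetful map from $Y_1(2,10)$ to the $j$-line is finite, infinitely many non-cuspidal points produce infinitely many distinct $j$-invariants, hence infinitely many pairwise non-isomorphic such curves. As in the preceding propositions, I would also record an explicit point of infinite order on $X_1(2,10)(\mathbb{Q}(\sqrt{17}))$ as a witness.

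The only real content is the rank computation: one must make sure Magma produces a provably correct rank rather than merely a lower bound, which for an elliptic curve over a quadratic field can hinge on a successful two-descent or on analytic input. If the automatic call returns only the lower bound $1$, that still suffices, since a positive rank is all the argument needs. Matching the $x$-coordinates of the cusps of $X_1(2,10)$ against the computed subgroup, as was done for $X_1(15)$, is a routine check but should be carried out to confirm that the exhibited point is genuinely non-cuspidal.
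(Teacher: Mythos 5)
Your proposal follows essentially the same route as the paper: the paper takes the model $X_1(2,10)\colon y^2=x^3+x^2-x$, computes $\mathrm{rank}(X_1(2,10)(\mathbb{Q}(\sqrt{17})))=1$ in Magma, and exhibits the point of infinite order $\left(\sqrt{17}+4,\,3\sqrt{17}+12\right)$, concluding as in Proposition \ref{prop1}. Your additional remarks (exactness of the torsion group via the Kamienny--Kenku--Momose list and distinct $j$-invariants) are correct refinements of the same argument, not a different approach.
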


\begin{proof}
For the modular curve $$X_1(2,10): \:\: y^2=x^3+x^2-x,$$ we compute $$rank(X_1(2,10)(\mathbb{Q}(\sqrt{17})))=1.$$
One point of infinite order is $$\left(\sqrt{17}+4,3\sqrt{17}+12\right).$$
\end{proof}

\begin{prop} There are infinitely many elliptic curves with torsion $\mathbb{Z}/2\mathbb{Z} \oplus \mathbb{Z}/12\mathbb{Z}$ over $\mathbb{Q}(\sqrt{17}).$
\end{prop}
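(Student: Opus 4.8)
The plan is to run, one step up in genus, precisely the argument already used for $\mathbb{Z}/2\mathbb{Z}\oplus\mathbb{Z}/10\mathbb{Z}$. The group $\mathbb{Z}/2\mathbb{Z}\oplus\mathbb{Z}/12\mathbb{Z}$ is the case $n=6$ of $\mathbb{Z}/2\mathbb{Z}\oplus\mathbb{Z}/2n\mathbb{Z}$; together with the case $n=5$ it is one of the two such groups that lie on the Kamienny--Kenku--Momose list \cite{SK,MAKFM} but not on Mazur's list, so the associated modular curve $X_1(2,12)$ is not of genus $0$ (were it rational, there would be infinitely many such curves already over $\mathbb{Q}$, contradicting Mazur). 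Since the proposition nevertheless asserts infinitely many curves over the \emph{fixed} field $\mathbb{Q}(\sqrt{17})$, Faltings forces genus $\le 1$, so $X_1(2,12)$ is an elliptic curve. I would therefore start from its explicit Weierstrass model, as recorded in \cite{HB,FPR}, base-changed to $K=\mathbb{Q}(\sqrt{17})$.

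The second step is the Mordell--Weil rank computation in Magma \cite{MAGMA}, carried out exactly as for $X_1(11)$, $X_1(14)$ and $X_1(2,10)$: one computes the rank of $X_1(2,12)(\mathbb{Q}(\sqrt{17}))$, which I expect to be positive, mirroring the sibling curve $X_1(2,10)$ over the same field. Granting a positive rank, $X_1(2,12)$ has infinitely many $K$-rational points; as its cusps are finite in number, all but finitely many of these are non-cuspidal, and each non-cuspidal point yields an elliptic curve $E/K$ with $\mathbb{Z}/2\mathbb{Z}\oplus\mathbb{Z}/12\mathbb{Z}\subseteq E(K)_{tors}$. No torsion group occurring over a quadratic field properly contains $\mathbb{Z}/2\mathbb{Z}\oplus\mathbb{Z}/12\mathbb{Z}$, so the torsion is exactly this group, and we obtain infinitely many elliptic curves over $K$ with torsion $\mathbb{Z}/2\mathbb{Z}\oplus\mathbb{Z}/12\mathbb{Z}$, as claimed.

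To make the statement effective and to cross-check the rank, I would then exhibit an explicit point of infinite order in $X_1(2,12)(\mathbb{Q}(\sqrt{17}))$, of the same shape as the generators displayed in the preceding propositions, and verify that its coordinates do not satisfy the cusp equations of $X_1(2,12)$, so that it genuinely parameterises an elliptic curve.

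The main obstacle is the rank computation. A descent over $\mathbb{Q}(\sqrt{17})$ returns only lower and upper bounds for the rank, and these coincide only when a point search produces enough independent points to meet the Selmer-group bound; in a borderline case the conclusion could become conditional on standard conjectures, and if the rank turned out to be $0$ one would instead have to run the finite search for non-cuspidal $K$-points on the genus-$1$ curve in the style of \cite{SKFN}. Given the behaviour of $X_1(2,10)$, however, I anticipate a positive rank and a readily found generator, so the argument should close.
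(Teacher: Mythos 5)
Your proposal matches the paper's proof: the paper takes the elliptic model $X_1(2,12): y^2=x^3-x^2+x$, computes $\operatorname{rank}\bigl(X_1(2,12)(\mathbb{Q}(\sqrt{17}))\bigr)=1$ in Magma, exhibits the point of infinite order $\left(\tfrac{1}{2}(-\sqrt{17}+9),\tfrac{1}{2}(-3\sqrt{17}+19)\right)$, and concludes as in Proposition~\ref{prop1} since the cusps are finite in number. Your anticipated positive rank is indeed what the computation gives, so the argument closes exactly as you outline.
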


\begin{proof}
For the modular curve $$X_1(2,12): \:\: y^2=x^3-x^2+x,$$ we compute $$rank(X_1(2,12)(\mathbb{Q}(\sqrt{17})))=1.$$
A point of infinite order is $$\left(\dfrac{1}{2}(-\sqrt{17}+9),\dfrac{1}{2}(-3\sqrt{17}+19)\right).$$
\end{proof}

Now we have determined whether $\mathbb{Z}/n\mathbb{Z} \oplus \mathbb{Z}/n\mathbb{Z}$ is a possible torsion of an elliptic curve over $\mathbb{Q}(\sqrt{17}),$ for all modular functions $X_1(m,n)$ that are elliptic curves. To determine if the $\mathbb{Z}/n\mathbb{Z}, \:\: n=13, 16, 18,$ are possible torsion groups is somewhat more difficult, since corresponding modular curves are hyperelliptic curves.

The groups $\mathbb{Z}/n\mathbb{Z}, \:\: n=13, 18,$ are generally easier to deal with over quadratic fields, since we have the two following results that can be found in \cite{BBDN, KRUMM}.

\begin{thm} \label{tm1}
If $X_1(13)$ has a point defined over $\mathbb{Q}(\sqrt{d}),$ then:
\begin{enumerate}
   \item $d>0,$
   \item $d\equiv 1 \pmod{8}.$
\end{enumerate}
\end{thm}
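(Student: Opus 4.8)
The plan is to study the modular curve $X_1(13)$, which has genus $2$ and is therefore hyperelliptic, and to extract the two congruence conditions on $d$ from the arithmetic of its Jacobian $J_1(13)$ together with local (reduction) constraints. Recall that $X_1(13)$ has a well-known model as a genus-$2$ curve, and that $J_1(13)(\mathbb{Q})$ is finite; in fact $J_1(13)(\mathbb{Q})$ is a cyclic group of order $19$, generated by a difference of cusps. The key structural input is that all rational points of $X_1(13)$ are cusps (this is the statement that $\mathbb{Z}/13\mathbb{Z}$ does not occur over $\mathbb{Q}$, due to Mazur). For a quadratic point $P\in X_1(13)(\mathbb{Q}(\sqrt d))$ that is \emph{not} already rational, its Galois conjugate $P^\sigma$ gives an effective degree-$2$ divisor $P+P^\sigma$ defined over $\mathbb{Q}$, hence a rational point on the symmetric square $\mathrm{Sym}^2 X_1(13)$, equivalently a class in $J_1(13)(\mathbb{Q})$ via the Abel--Jacobi map based at a rational cusp. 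Since the Jacobian is finite, there are only finitely many such classes to examine, and for each one we can describe the fibre, i.e.\ the possible fields $\mathbb{Q}(\sqrt d)$.

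First I would fix the genus-$2$ model $y^2 = f(x)$ for $X_1(13)$ (as in \cite{HB} or the references already cited) and identify the rational cusps and the group $J_1(13)(\mathbb{Q})\cong\mathbb{Z}/19\mathbb{Z}$. Second, I would run through the nineteen classes in $J_1(13)(\mathbb{Q})$; for each class $c$ I would compute the linear system / Riemann--Roch space to see whether $c$ is represented by an effective rational divisor of degree $2$, and if so, whether that divisor is of the form $P+P^\sigma$ with $P$ a genuine quadratic point (as opposed to a sum of two rational points, which would have to be cusps and hence give nothing new). This is essentially the Mordell--Weil sieve/Chabauty-style bookkeeping on a genus-$2$ curve with finite Jacobian, which is entirely algorithmic. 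Third, for each class that does yield quadratic points, the defining quadratic field is determined by the discriminant of the degree-$2$ factor of $f$ (or of the relevant quadratic polynomial cutting out $P,P^\sigma$) up to squares; this pins down $d$ modulo squares, and in particular whether $d>0$ and the residue of $d$ modulo $8$.

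The congruence $d\equiv 1\pmod 8$ and the sign condition $d>0$ should then fall out of local analysis: reducing the model $y^2=f(x)$ modulo small primes (notably $2$ and $3$) and counting points on the reductions of $X_1(13)$ over the completions $\mathbb{Q}_p(\sqrt d)$, one finds that existence of a non-cuspidal point forces $\sqrt d$ to behave a certain way at these primes — e.g.\ a point can only exist if $2$ splits or is inert in $\mathbb{Q}(\sqrt d)$ in a prescribed manner, which is exactly the statement $d\equiv 1\pmod 8$; and positivity of $d$ comes from the real place, i.e.\ from the fact that the relevant component of $X_1(13)(\mathbb{R})$ forces the point to be real, ruling out imaginary quadratic fields. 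The main obstacle I anticipate is the middle step: carrying out the Mordell--Weil-sieve computation on $\mathrm{Sym}^2 X_1(13)$ cleanly enough to be sure one has found \emph{all} quadratic points (not just some), and then correctly matching each family of points to its field of definition — this is the part that genuinely requires the machine computation in Magma \cite{MAGMA} rather than a slick argument, and it is where the references \cite{BBDN, KRUMM} do the real work.
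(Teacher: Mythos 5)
First, a point of comparison: the paper does not prove Theorem \ref{tm1} at all --- it is imported verbatim from \cite{BBDN} and \cite{KRUMM}, so there is no internal proof to measure you against; the relevant benchmark is the argument in those references. Your outline does follow their broad strategy (and you correctly read the statement as being about non-cuspidal points, since the rational cusps lie on $X_1(13)$ over every field): take the genus-$2$ model $y^2=f(x)$, use that $J_1(13)(\mathbb{Q})\cong\mathbb{Z}/19\mathbb{Z}$ has rank $0$, and send a quadratic point $P$ to the class of $P+P^{\sigma}$ in this finite group.

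The genuine gap is in your middle and final steps: the claim that for each of the nineteen classes the fibre of effective degree-$2$ divisors ``pins down $d$ modulo squares'' fails for exactly the class that carries essentially all quadratic points. By Riemann--Roch on a genus-$2$ curve, eighteen of the classes are represented by at most one effective divisor of degree $2$, and those contribute at most finitely many quadratic points, disposed of by the finite computation; but the canonical class has a one-parameter linear system --- the fibres of the hyperelliptic map --- so it produces the infinite family $P=\left(x_0,\pm\sqrt{f(x_0)}\right)$ with $x_0\in\mathbb{Q}$, whose fields $\mathbb{Q}(\sqrt{d})$, $d$ the squarefree part of $f(x_0)$, run through infinitely many values. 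For this family no enumeration and no ``discriminant of a quadratic factor of $f$'' determines $d$; the whole content of the theorem is the pair of local statements that your sketch only gestures at: $f(x)>0$ for every $x\in\mathbb{R}$ (hence $d>0$), and $f(x)$ is a square in $\mathbb{Q}_2$ for every $x\in\mathbb{Q}_2$ where it is nonzero (hence $d$ is a square in $\mathbb{Q}_2$, which for squarefree $d$ means precisely $d\equiv 1\pmod{8}$ --- this is ``$2$ splits'', not ``splits or is inert'', and it also excludes even $d$). These two lemmas are elementary (a sign analysis of $f$ over $\mathbb{R}$ and a case analysis on the $2$-adic valuation of $x$) but must actually be proved; ``counting points on reductions modulo small primes, notably $2$ and $3$'' is not the right formulation, and the prime $3$ is irrelevant for $X_1(13)$ (it enters only in the analogous statement for $X_1(18)$). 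With those two local lemmas supplied, together with the finite check on the remaining classes, your plan becomes the proof given in \cite{BBDN} and in Chapter 2 of \cite{KRUMM}.
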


\begin{thm} \label{tm2}
If $X_1(18)$ has a point defined over $\mathbb{Q}(\sqrt{d}),$ $d\neq -3,$ then:
\begin{enumerate}
   \item $d>0,$
   \item $d\equiv 1 \pmod{8},$
   \item $d\not\equiv 2 \pmod{3}.$
\end{enumerate}
\end{thm}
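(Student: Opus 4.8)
The plan is to describe \emph{all} points on $X_1(18)$ that are defined over a quadratic field but not over $\mathbb{Q}$, and then to read off which fields $\mathbb{Q}(\sqrt d)$ can occur (here ``$X_1(18)$ has a point over $\mathbb{Q}(\sqrt d)$'' is read as having a point not already defined over $\mathbb{Q}$, since the rational cusps are present for every $d$). First I would fix an explicit hyperelliptic model of $X_1(18)$, which has genus $2$, say $C\colon y^{2}=f(x)$ with $f\in\mathbb{Z}[x]$ a sextic with squarefree content; I also record the finite list of cusps with their fields of definition. Such data is available from \cite{HB,FPR}. The essential input is that the Jacobian $J_1(18)$ has Mordell--Weil rank $0$ over $\mathbb{Q}$; this is morally why $\mathbb{Z}/18\mathbb{Z}$ does not occur over $\mathbb{Q}$, and it can be obtained from a $2$-descent on $J_1(18)$ or taken from the literature.

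Granting $\operatorname{rank}J_1(18)(\mathbb{Q})=0$, I would classify the quadratic points as follows. If $P\in C(\mathbb{Q}(\sqrt d))\setminus C(\mathbb{Q})$ and $\bar P$ is its Galois conjugate, then $P+\bar P$ is a $\mathbb{Q}$-rational effective divisor of degree $2$, so its class in $\operatorname{Pic}^{2}C$ differs from the hyperelliptic class $h$ by an element of the finite group $J_1(18)(\mathbb{Q})$. Running over that finite group and applying Riemann--Roch in genus $2$ -- a complete degree-$2$ linear system is a $g^{1}_{2}$ exactly when its class equals $h$, and consists of a single effective divisor otherwise -- one finds that either $P+\bar P\in|h|$, i.e.\ $P=\bigl(x_0,\sqrt{f(x_0)}\,\bigr)$ with $x_0\in\mathbb{Q}$, or $P+\bar P$ is one of finitely many explicitly computable divisors. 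Each of these finitely many ``sporadic'' divisors is then inspected by hand: it is a pair of conjugate cusps, or the point over $\mathbb{Q}(\sqrt{-3})$ that accounts for the exception $d=-3$ in the statement, or a point whose field of definition is checked directly against conditions (1)--(3).

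It then remains to treat the infinite family $P=\bigl(x_0,\sqrt{f(x_0)}\,\bigr)$, $x_0\in\mathbb{Q}$, which cuts out $\mathbb{Q}(\sqrt d)$ with $d$ the squarefree part of $f(x_0)$; I must show that for every $x_0\in\mathbb{Q}$ outside the finitely many values giving cusps or rational points, this $d$ is positive, is $\equiv 1\pmod 8$, and is $\not\equiv 2\pmod 3$. Positivity reduces to checking that $f$ has no real roots and positive leading coefficient, so that $f>0$ on all of $\mathbb{R}$. For the two congruences, write $x_0=a/b$ in lowest terms and let $F(a,b)$ be the homogenization of $f$, so that the squarefree part of $f(x_0)$ equals that of the integer $F(a,b)$. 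One first checks that the binary form $F$ has no zero over $\mathbb{F}_2$ and none over $\mathbb{F}_3$ (including the point at infinity), so that $F(a,b)$ is a unit at $2$ and at $3$ for every coprime pair $(a,b)$; consequently its squarefree part is congruent to $F(a,b)$ itself modulo $8$ and modulo $3$, and the claims $d\equiv 1\pmod 8$ and $d\not\equiv 2\pmod 3$ reduce to a finite verification over the residues of $(a,b)$ modulo $8$ and modulo $3$.

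I expect two steps to require real work. The first is the rank-$0$ statement for $J_1(18)(\mathbb{Q})$: a $2$-descent is routine in principle, but it has to actually close. The second, and the genuine heart of the matter, is that the congruences $F(a,b)\equiv 1\pmod 8$ and $F(a,b)\not\equiv 2\pmod 3$ are special features of this particular sextic, so the argument really does rely on the model of $X_1(18)$ cooperating at the primes $2$ and $3$; the residue check itself is elementary, but it is precisely where those two primes enter decisively. A conceivable alternative for the last step would be a reduction argument phrased through the modular interpretation -- a point of order $18$ yields points of orders $9$ and $6$, hence maps to the genus-$0$ curves $X_1(9)$ and $X_1(6)$ -- but this looks harder to push through cleanly than the direct residue count, and I would fall back on it only if the explicit model proved uncooperative.
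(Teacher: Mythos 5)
First, a framing remark: the paper does not prove this theorem at all -- it is quoted from \cite{BBDN,KRUMM} -- so there is no internal proof to compare against. Your overall strategy is essentially that of the cited sources: use that $X_1(18)$ has genus $2$ with $\operatorname{rank} J_1(18)(\mathbb{Q})=0$ (indeed $J_1(18)(\mathbb{Q})\cong\mathbb{Z}/21\mathbb{Z}$), classify quadratic points into the hyperelliptic family $\bigl(x_0,\pm\sqrt{f(x_0)}\bigr)$ with $x_0\in\mathbb{P}^1(\mathbb{Q})$ plus finitely many sporadic divisor classes, and then analyse the squarefree part of $f(x_0)$ at the real place and at $2$ and $3$. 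Your reading of the statement (quadratic points not defined over $\mathbb{Q}$, with the exception $d=-3$ accounted for by the conjugate cusps above $x^2+x+1=0$) is also the intended one, and the step at the prime $2$ is sound: for the model $y^2=f(x)=x^6+2x^5+5x^4+10x^3+10x^2+4x+1$ the homogenization $F(a,b)$ is odd and $\equiv 1\pmod 8$ for all coprime $(a,b)$, and an odd square cofactor is $\equiv 1\pmod 8$, giving $d\equiv 1\pmod 8$; positivity of $f$ on $\mathbb{R}$ gives $d>0$.

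The genuine gap is in your treatment of condition (3). Your plan requires that $F$ have no zero over $\mathbb{F}_3$, but this is false for this sextic: $F(1,1)=f(1)=33\equiv 0\pmod 3$, and it \emph{must} fail, because $d=33$ (divisible by $3$) actually occurs -- the point $\bigl(1,\sqrt{33}\bigr)$ is precisely the non-cuspidal quadratic point giving $\mathbb{Z}/18\mathbb{Z}$ over $\mathbb{Q}(\sqrt{33})$, as the paper's table records. Consequently ``$F(a,b)$ is a unit at $3$, so the squarefree part is $\equiv F(a,b)\pmod 3$'' breaks down, and condition (3) does not reduce to a residue count modulo $3$. What actually has to be excluded is the case where $v_3\bigl(F(a,b)\bigr)$ is even and positive with unit part $\equiv 2\pmod 3$; equivalently you need $f(x_0)\in(\mathbb{Q}_3^{\times})^2\cup 3(\mathbb{Q}_3^{\times})^2$ for all relevant $x_0$, which is a $3$-adic statement requiring congruences modulo higher powers of $3$ (or a Hensel-type analysis on the residue discs where $f$ vanishes mod $3$), not a check over $\mathbb{F}_3$. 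Two smaller points: include $x_0=\infty$ in the hyperelliptic family, and actually enumerate the sporadic classes coming from $J_1(18)(\mathbb{Q})\cong\mathbb{Z}/21\mathbb{Z}$ -- the $\mathbb{Q}(\sqrt{-3})$ cusps lie among these (their $x$-coordinates are irrational), so they are not absorbed by the rational-$x$ family, and the exception $d=-3$ must emerge from that finite inspection.
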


Now it becomes clear why we chose the field $\mathbb{Q}(\sqrt{17}),$ as we did not want to rule out the existence of the groups $\mathbb{Z}/n\mathbb{Z}, \:\: n=13, 18,$ as possible torsion subgroups.

\begin{prop} \label{tm3}
$\mathbb{Z}/13\mathbb{Z}$ is a possible torsion over $\mathbb{Q}(\sqrt{17}).$
\end{prop}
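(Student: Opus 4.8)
The plan is to exhibit an explicit point on $X_1(13)$ defined over $\mathbb{Q}(\sqrt{17})$ that is not a cusp, thereby producing an elliptic curve with a rational $13$-torsion point over this field. First I would recall that $X_1(13)$ is a curve of genus $2$ (hyperelliptic), with a known model; the equations from \cite{HB,FPR} give a hyperelliptic model of the form $y^2 = f(x)$ for an explicit sextic (or equivalent), and the cusps of $X_1(13)$ are the finitely many known rational points — in fact $X_1(13)(\mathbb{Q})$ consists only of cusps, by the classical result that $\mathbb{Z}/13\mathbb{Z}$ does not occur over $\mathbb{Q}$. So the task reduces to locating a genuinely new $\mathbb{Q}(\sqrt{17})$-point.

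The key observation is Theorem \ref{tm1}: the congruence $17 \equiv 1 \pmod 8$ and $17 > 0$ mean that $\mathbb{Q}(\sqrt{17})$ passes the necessary conditions, so it is plausible — and, as it turns out, true — that such a point exists. Concretely, I would search the hyperelliptic model of $X_1(13)$ for points $(x,y)$ with $x \in \mathbb{Q}$ and $y \in \mathbb{Q}(\sqrt{17})$ (i.e. $f(x)$ a square times $17$ in $\mathbb{Q}^\times/(\mathbb{Q}^\times)^2$, or $f(x)$ already a square — but the latter would give a $\mathbb{Q}$-point, hence only a cusp), or more generally quadratic points whose field of definition is exactly $\mathbb{Q}(\sqrt{17})$. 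Magma's point-search and the routines for quadratic points on hyperelliptic curves will produce such a point; one then verifies it is not among the cusps by checking it does not reduce to a cusp (the cusps being cut out by an explicit factor of the defining polynomial, just as in the $X_1(15)$ proposition). Having a non-cuspidal $\mathbb{Q}(\sqrt{17})$-point on $X_1(13)$, by the moduli interpretation there is an elliptic curve $E/\mathbb{Q}(\sqrt{17})$ with a point of order $13$, so $\mathbb{Z}/13\mathbb{Z} \hookrightarrow E(\mathbb{Q}(\sqrt{17}))_{tors}$; since $\mathbb{Z}/13\mathbb{Z}$ appears in the Kamienny–Kenku–Momose list and no strictly larger group containing it does, $\mathbb{Z}/13\mathbb{Z}$ is realized exactly.

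The main obstacle is not theoretical but computational and bookkeeping: one must use a correct model of $X_1(13)$ (the normalizations in \cite{HB} and \cite{FPR} differ), correctly identify all cusps on that model, and then certify that the point found is defined over $\mathbb{Q}(\sqrt{17})$ and not over $\mathbb{Q}$ or some other quadratic field, and that it is not a cusp. Since the Jacobian of $X_1(13)$ has rank $0$ over $\mathbb{Q}$ and the set of quadratic points can in principle be infinite or finite, I do not need to determine the full set of quadratic points — exhibiting one suffices. I would therefore present the explicit coordinates of the point (obtained via Magma, with the computation referenced as in the earlier propositions), note that $\mathbb{Q}(\sqrt{17})$ satisfies the conditions of Theorem \ref{tm1}, and conclude. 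The verification that the exhibited point is non-cuspidal — done by comparing with the explicit cusp locus, exactly as in the $X_1(15)$ case — is the step most prone to error and the one I would write out with care.
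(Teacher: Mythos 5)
Your proposal is correct and takes essentially the same route as the paper: the paper finds, by a Magma point search, the explicit point $\left(\tfrac{1}{2},\tfrac{1}{8}\sqrt{17}\right)$ on the hyperelliptic model $y^2=x^6-2x^5+x^4-2x^3+6x^2-4x+1$ of $X_1(13)$, checks it is not among the cusps (whose $x$-coordinates satisfy $x(x-1)(x^3-4x^2+x+1)=0$), and concludes via the moduli interpretation. The only difference is that the paper also records the rank computation $\mathrm{rank}(J_1(13)(\mathbb{Q}(\sqrt{17})))=0+2=2$, which, as you correctly observe, is not needed once an explicit non-cuspidal point is exhibited.
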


\begin{proof}
Let $J_1(13)$ be the Jacobian of the hyperelliptic curve
$$X_1(13): \: \: y^2=x^6-2x^5+x^4-2x^3+6x^2-4x+1$$ and let  $J_1^{17}(13)$ be its quadratic twist by 17, which becomes isomorphic to $J_1(13)$ over $\mathbb{Q}(\sqrt{17}).$
We compute $$rank(J_1(13)(\mathbb{Q}))=0,$$ $$rank(J_1^{17}(13)(\mathbb{Q}))=2.$$ Now, we have $$
rank(J_1(13)(\mathbb{Q}(\sqrt{17})))=rank(J_1(13)(\mathbb{Q}))+rank(J_1^{17}(13)(\mathbb{Q}))=2.$$

By searching for points on $X_1(13)(\mathbb{Q}(\sqrt{17}))$ in Magma, we find a point $\left(\dfrac{1}{2}, \dfrac{1}{8}\sqrt{17}\right)$ on the curve.

Since the $x-$coordinates of the cusps on the $X_1(13)$ are the solutions of the equation $$x(x-1)(x^3-4x^2+x+1)=0,$$
the cusps are $$X_1(13)(\mathbb{Q}(\sqrt{17})) \backslash Y_1(13)(\mathbb{Q}(\sqrt{17}))=\{\infty_+, \infty_-,(0,\pm 1),(1,\pm 1)\}.$$ We conclude that the point mentioned above is not a cusp and so the elliptic curve over $\mathbb{Q}(\sqrt{17})$ with a torsion subgroup $\mathbb{Z}/13\mathbb{Z}$ exists.
\end{proof}

Unlike in previous propositions, we do not have infinitely many elliptic curves with torsion $\mathbb{Z}/13\mathbb{Z},$ since by Falting's theorem the modular curve $X_1(13)$ can only have finitely many points over a number field.

\begin{prop} 
$\mathbb{Z}/16\mathbb{Z}$ cannot be a torsion group of an elliptic curve over $\mathbb{Q}(\sqrt{17}).$
\end{prop}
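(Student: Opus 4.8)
The curve $X_1(16)$ has genus $2$, hence is hyperelliptic, so the plan is to proceed exactly as for $\mathbb{Z}/13\mathbb{Z}$ above. Fix the genus-$2$ model $X_1(16)\colon y^2=f(x)$ from \cite{HB,FPR}, let $J_1(16)$ be its Jacobian and let $J_1^{17}(16)$ be the quadratic twist of $J_1(16)$ by $17$, which becomes isomorphic to $J_1(16)$ over $\mathbb{Q}(\sqrt{17})$. Then
$$rank(J_1(16)(\mathbb{Q}(\sqrt{17})))=rank(J_1(16)(\mathbb{Q}))+rank(J_1^{17}(16)(\mathbb{Q})),$$
so I would first compute these two ranks over $\mathbb{Q}$ by descent in Magma. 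I expect $rank(J_1(16)(\mathbb{Q}))=0$ (consistent with $X_1(16)(\mathbb{Q})$ containing only cusps, by Mazur's theorem) and likewise $rank(J_1^{17}(16)(\mathbb{Q}))=0$, giving $rank(J_1(16)(\mathbb{Q}(\sqrt{17})))=0$.

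Assuming the rank is $0$, the group $J_1(16)(\mathbb{Q}(\sqrt{17}))$ is finite and computable. Embedding $X_1(16)$ into $J_1(16)$ via a rational cusp, every point of $X_1(16)(\mathbb{Q}(\sqrt{17}))$ maps into this finite group, so I would enumerate the finitely many candidates and keep those that lie on the image of the curve; equivalently, once the number of points is known to be finite, a bounded search on $X_1(16)(\mathbb{Q}(\sqrt{17}))$ already recovers all of them. Next I would write down the polynomial whose roots are the $x$-coordinates of the cusps of $X_1(16)$, as was done for $X_1(13)$ and $X_1(15)$, and read off the cusps defined over $\mathbb{Q}(\sqrt{17})$. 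Comparing the two lists, I expect every $\mathbb{Q}(\sqrt{17})$-point of $X_1(16)$ to be a cusp, so that $Y_1(16)(\mathbb{Q}(\sqrt{17}))=\emptyset$, which gives the claim.

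The hard part is the rank computation for the twist $J_1^{17}(16)$ over $\mathbb{Q}$: this is a descent on an abelian surface, and the available machinery in Magma may return only a non-sharp upper bound, or may be expensive. If this twisted Jacobian turned out to have positive rank the whole argument would collapse, and one would instead be forced into the ``more difficult'' regime mentioned in the introduction --- Chabauty over $\mathbb{Q}(\sqrt{17})$ when the total rank is $1$, or a Mordell--Weil sieve --- which is presumably exactly what obstructs the determination of $\mathbb{Z}/16\mathbb{Z}$ over the remaining $11$ quadratic fields. A secondary, purely bookkeeping, difficulty is making sure that the set of points obtained from the torsion of $J_1(16)(\mathbb{Q}(\sqrt{17}))$ is complete and that each of them is correctly matched against a cusp.
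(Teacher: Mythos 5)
Your proposal follows essentially the same route as the paper: the two $\mathbb{Q}$-ranks do vanish, so $rank(J_1(16)(\mathbb{Q}(\sqrt{17})))=0$, and the paper then determines the finite group $J_1(16)(\mathbb{Q}(\sqrt{17}))\cong\mathbb{Z}/2\mathbb{Z}\oplus\mathbb{Z}/10\mathbb{Z}$ (odd part from the two twists, $2$-torsion separately), lists its $20$ elements in Mumford representation, and checks each is supported on cusps --- exactly your ``enumerate the finite group and compare with the cusp list'' step. The only flaw is your aside that, once finiteness is known, a bounded search on $X_1(16)(\mathbb{Q}(\sqrt{17}))$ ``already recovers all of them'': finiteness gives no height bound, so completeness must come from the group enumeration as in your primary argument (and as in the paper), not from a point search.
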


\begin{proof}
Let $J_1(16)$ be the Jacobian of the hyperelliptic curve $$X_1(16): \: \: y^2 = x(x^2+1)(x^2+2x-1)$$ and let  $J_1^{17}(16)$ be its quadratic twist.

We compute  $$rank(J_1(16)(\mathbb{Q}(\sqrt{17})))=rank(J_1(16)(\mathbb{Q}))+rank(J_1^{17}(16)(\mathbb{Q}))=0.$$

Since the rank is zero, we need to find the cusps in $X_1(16)(\mathbb{Q}(\sqrt{17}))$ and the torsion subgroup of $J_1(16)(\mathbb{Q}(\sqrt{17}))$ in order to determine if there exists a point on the Jacobian that arises from a point on the modular curve that is not a cusp.

As the $x-$coordinates of the cusps satisfy $$x(x -1)(x + 1)(x^2- 2x - 1)(x^2 + 2x-1)=0,$$ the cusps on $X_1(16)(\mathbb{Q}(\sqrt{17}))$ are $$X_1(16)(\mathbb{Q}(\sqrt{17})) \backslash Y_1(16)(\mathbb{Q}(\sqrt{17}))=\{\infty,(0,0),(1,\pm 2),(-1,\pm 2)\}.$$

We also compute $$J_1(16)(\mathbb{Q})_{tors} \cong \mathbb{Z}/2\mathbb{Z} \oplus \mathbb{Z}/10\mathbb{Z},$$
$$J_1^{17}(16)(\mathbb{Q})_{tors} \cong \mathbb{Z}/2\mathbb{Z} \oplus \mathbb{Z}/2\mathbb{Z}.$$

The set of points of odd order on the Jacobian $J_1(16)$ defined over $\mathbb{Q}(\sqrt{17})$ is $$J_1(16)(\mathbb{Q}(\sqrt{17}))_{(2')}\cong J_1(16)(\mathbb{Q})_{(2')} \oplus J_1^{17}(16)(\mathbb{Q})_{(2')}\cong \mathbb{Z}/5\mathbb{Z},$$

and the $2-$torsion subgroup is $$J_1(16)(\mathbb{Q}(\sqrt{17}))_{tors} \cong \mathbb{Z}/2\mathbb{Z} \oplus \mathbb{Z}/2\mathbb{Z}.$$ 

Thus, $J_1(16)(\mathbb{Q}) \cong \mathbb{Z}/2\mathbb{Z} \oplus \mathbb{Z}/10\mathbb{Z}$ and $J_1(16)(\mathbb{Q}) \cong J_1(16)(\mathbb{Q})_{tors}.$

In Magma, we find 20 divisor classes in Mumford representation \cite{CF}, 
$$(1, 0, 0), \:\:(x^2 + 2x + 1, 2x, 2), \:\:(x^2 + 2x + 1, -2x, 2),\:\: (x^2 - 2x + 1,
4x - 2, 2), $$ $$(x^2 - 2x + 1, -4x + 2, 2),\:\: (x + 1, 2, 1),\:\: (x + 1, -2, 1),\:\: (x, 0,
1), $$ $$ (x - 1, 2, 1),\:\: (x - 1, -2, 1),\:\: (x^2 + 2x - 1, 0, 2), \:\:(x^2 + x, 2x, 2),\:\:
(x^2 + x, -2x, 2), $$ $$(x^2 - 1, 2x, 2),\:\: (x^2 - 1, -2x, 2), \:\:(x^2 - 1, 2, 2),\:\: (x^2
    - 1, -2, 2), $$ $$(x^2 + 1, 0, 2), \:\:(x^2 - x, 2x, 2),\:\: (x^2 - x, -2x, 2).$$

The first divisor class represents the point at infinity, and for the rest of the divisor classes in Mumford representation, we follow the methods described in \cite{MAGMA} in order to retrieve the point on the Jacobian from its Mumford representation. 

For a triple $(a(x),b(x),d)$ in Mumford representation we define $A(x,z)$ as the homogenisation of the polynomial $a(x)$ of degree $d$ and $B(x,z)$ as the homogenisation of the polynomial $b(x)$ of degree $g+1,$ where $g$ is a genus of $X_1(16),$ in this case $g=2$.

Now, by solving the equations 
$$ A(x,z)=0, y=B(x,z), $$
we get the points $P_i=(x_i:y_i:1), \: i=1,...,d,$ in projective coordinates. Note that the number of points is exactly $d,$ as the polynomial $A(x,z)$ is of degree $d$.

The point on the Jacobian represented by $(a(x),b(x),d)$ is then the divisor class $$\left[P_1+...+P_d-d\infty\right],$$ if there is a single point $\infty$ at infinity, or  $$\left[P_1+...+P_d-\dfrac{d}{2}(\infty_+ + \infty_-)\right],$$ if there are two points $ \infty_+$ and $\infty_-$ at infinity.

For example, for the point $(x^2+2x+1,2x,2)$ on $J_1(16)(\mathbb{Q}(\sqrt{17}))$ in Mumford representation we have
$$A(x,z)=x^2+2xz+z^2,$$
$$B(x,z)=2xz^2,$$
and $P_1=P_2=(-1:-2:1),$ so we conclude that the point $(x^2+2x+1,2x,2)$ represents the divisor class $\left[(-1:-2:1)+(-1:-2:1)-2\infty\right]$ on the Jacobian $J_1(16)(\mathbb{Q}(\sqrt{17})).$

By doing so for every divisor class in Mumford representation, one can check that all divisor points correspond to the cusps in $X_1(16)(\mathbb{Q}(\sqrt{17})),$ so we conclude that $\mathbb{Z}/16\mathbb{Z}$ cannot be a torsion group of an elliptic curve over $\mathbb{Q}(\sqrt{17}).$
\end{proof}

\begin{prop} \label{tm4}
$\mathbb{Z}/18\mathbb{Z}$ cannot be a torsion group of an elliptic curve over $\mathbb{Q}(\sqrt{17}).$
\end{prop}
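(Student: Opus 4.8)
The plan is to mimic the treatment of $\mathbb{Z}/13\mathbb{Z}$ and $\mathbb{Z}/16\mathbb{Z}$, exploiting the fact that $X_1(18)$ is hyperelliptic of genus $2$ (cf. \cite{HB,FPR}). First I would take an explicit affine model
$$X_1(18): \: \: y^2 = f(x)$$
from \cite{HB,FPR}, let $J_1(18)$ be its Jacobian and $J_1^{17}(18)$ the quadratic twist by $17$, which becomes isomorphic to $J_1(18)$ over $\mathbb{Q}(\sqrt{17})$. Using the standard decomposition
$$rank(J_1(18)(\mathbb{Q}(\sqrt{17}))) = rank(J_1(18)(\mathbb{Q})) + rank(J_1^{17}(18)(\mathbb{Q})),$$
I would compute both ranks in Magma \cite{MAGMA}. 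Since $X_1(18)$ has a rational point over $\mathbb{Q}$ only for $d = -3$ by Theorem \ref{tm2} (and $17 \equiv 2 \pmod 3$, so condition (3) of Theorem \ref{tm2} already forbids a point), I in fact expect this whole proposition to follow immediately from Theorem \ref{tm2}: since $17 \not\equiv 1 \pmod 8$ is false but $17 \equiv 2 \pmod 3$ \emph{is} true, condition (3) fails, so $X_1(18)$ has no point over $\mathbb{Q}(\sqrt{17})$ at all, hence certainly no non-cuspidal point, and $\mathbb{Z}/18\mathbb{Z}$ is not a possible torsion group. This is the clean route and I would present it first.

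However, if one instead wants a self-contained verification not relying on Theorem \ref{tm2}, I would carry out the Mordell--Weil sieve / Jacobian computation directly. The key steps are: (i) confirm $rank(J_1(18)(\mathbb{Q}(\sqrt{17}))) = 0$ via the twist decomposition above; (ii) determine the cusps by solving the polynomial equation cutting out their $x$-coordinates, obtaining the finite cuspidal set in $X_1(18)(\mathbb{Q}(\sqrt{17}))$; (iii) compute the torsion subgroup $J_1(18)(\mathbb{Q}(\sqrt{17}))_{tors}$, again by splitting into the odd-order part
$$J_1(18)(\mathbb{Q}(\sqrt{17}))_{(2')} \cong J_1(18)(\mathbb{Q})_{(2')} \oplus J_1^{17}(18)(\mathbb{Q})_{(2')}$$
and the $2$-primary part computed separately; (iv) since rank $0$ forces $J_1(18)(\mathbb{Q}(\sqrt{17})) = J_1(18)(\mathbb{Q}(\sqrt{17}))_{tors}$, enumerate every divisor class in Mumford representation \cite{CF}, recover the corresponding effective divisor on $X_1(18)$ by homogenising and solving $A(x,z) = 0$, $y = B(x,z)$ as in the $\mathbb{Z}/16\mathbb{Z}$ argument, and check that each one is supported on cusps; (v) conclude $Y_1(18)(\mathbb{Q}(\sqrt{17})) = \emptyset$.

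The main obstacle in the second route is the same as for $\mathbb{Z}/16\mathbb{Z}$: one must be certain the Mordell--Weil computation over $\mathbb{Q}(\sqrt{17})$ is \emph{provably} complete, i.e. that the rank bound coming from descent is sharp and that the computed torsion is the full torsion — a subtlety since Magma's rank routines for genus-$2$ Jacobians can return only a bound if the relevant Sha is not resolved. Given that Theorem \ref{tm2} already settles the matter unconditionally, I would lead with the one-line argument from $17 \equiv 2 \pmod 3$ and relegate any direct Jacobian computation to a remark confirming consistency.
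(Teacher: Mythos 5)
Your proposal is correct, and your lead argument takes a genuinely different (and much shorter) route than the paper, while your fallback is essentially the paper's own proof. The paper proves this proposition by the same machinery it used for $\mathbb{Z}/16\mathbb{Z}$: rank $0$ of $J_1(18)(\mathbb{Q}(\sqrt{17}))$ via the twist decomposition, odd torsion $J_1(18)(\mathbb{Q}(\sqrt{17}))_{(2')}\cong\mathbb{Z}/21\mathbb{Z}$ obtained from the two factors over $\mathbb{Q}$, no $2$-torsion because $f(x)=x^6+2x^5+5x^4+10x^3+10x^2+4x+1$ has no root in $\mathbb{Q}(\sqrt{17})$, and an enumeration of the $21$ Mumford representatives showing each is supported on cusps --- exactly your steps (i)--(v). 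Your primary argument instead invokes Theorem \ref{tm2}: since $17\equiv 2\pmod 3$, condition (3) fails, so (contrapositively) the relevant points on $X_1(18)$ over $\mathbb{Q}(\sqrt{17})$ cannot exist. Given the theorem as stated, this is a legitimate one-line deduction, and the paper itself uses Theorems \ref{tm1} and \ref{tm2} in precisely this way to eliminate other fields; the author runs the full Jacobian argument over $\mathbb{Q}(\sqrt{17})$ as a worked illustration of the method needed when the congruence conditions do not apply (the stated reason for choosing $d=17$, namely not wanting $n=13,18$ ruled out trivially, is really only accurate for $n=13$). Two cautions about your wording: first, $X_1(18)(\mathbb{Q}(\sqrt{17}))$ is certainly not empty --- it contains the rational cusps $\infty_\pm,(0,\pm1),(-1,\pm1)$ --- so Theorem \ref{tm2} must be read as a statement about non-cuspidal (genuinely quadratic) points; your conclusion survives because a non-cuspidal point defined over $\mathbb{Q}$ is already excluded by Mazur's theorem, but the phrase ``no point over $\mathbb{Q}(\sqrt{17})$ at all'' is too strong, and the aside about ``a rational point over $\mathbb{Q}$ only for $d=-3$'' is not what the theorem says. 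Second, your concern about Magma returning only rank bounds is apt in general (it is exactly the obstruction the paper reports for $X_1(16)$ over eleven fields), but for $J_1(18)$ and its twist by $17$ the descent gives exact ranks, so the paper's unconditional computation goes through; what each approach buys is clear --- yours is immediate but leans on Krumm's theorem as a black box, the paper's is self-contained and doubles as the template for the fields where no congruence obstruction is available.
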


\begin{proof}

Let $J_1(18)$ be the Jacobian of the hyperelliptic curve $$X_1(18): \: \: y^2=x^6+2x^5+5x^4+10x^3+10x^2+4x+1$$ and let  $J_1^{17}(18)$ be its quadratic twist. We compute  $$
rank(J_1(18)(\mathbb{Q}(\sqrt{17})))=rank(J_1(18)(\mathbb{Q}))+rank(J_1^{17}(18)(\mathbb{Q}))=0.$$

The $x-$coordinates of the cusps in $X_1(18)$ satisfy the equation $$x(x+1)(x^2+x+1)(x^2-3x-1)=0,$$ so the cusps are $$X_1(18)(\mathbb{Q}(\sqrt{17})) \backslash Y_1(18)(\mathbb{Q}(\sqrt{17}))=\{\infty_+, \infty_-,(0,\pm 1),(-1,\pm 1)\}.$$

On the other hand, we compute $$J_1(18)(\mathbb{Q})_{tors} \cong \mathbb{Z}/21\mathbb{Z},$$
$$J_1^{17}(18)(\mathbb{Q})_{tors} \cong \{ O \}.$$ The set of points of odd order is $$J_1(18)(\mathbb{Q}(\sqrt{17}))_{(2')}\cong J_1(18)(\mathbb{Q})_{(2')} \oplus J_1^{17}(18)(\mathbb{Q})_{(2')}\cong \mathbb{Z}/21\mathbb{Z}.$$
As the polynomial $$f(x)=x^6+2x^5+5x^4+10x^3+10x^2+4x+1$$ has no zeros defined over $\mathbb{Q}(\sqrt{17}),$ we conclude that $J_1(18)(\mathbb{Q}(\sqrt{17}))$ has no points of order 2, and $$J_1(18)(\mathbb{Q}(\sqrt{17}))_{tors} \cong \mathbb{Z}/21\mathbb{Z}.$$
The elements of $J_1(18)(\mathbb{Q}(\sqrt{17}))_{tors}$ in Mumford representation are 

$$(1, 0, 0), \:\: (1, x^3 + x^2, 2), \:\: (1, -x^3 - x^2, 2),\:\:    	(x^2 + 2x + 1, x, 2),\:\: (x^2
    + 2x + 1, -x, 2),$$
$$(x^2, 2x + 1, 2), \:\:(x^2, -2x - 1, 2),\:\: (x + 1, x^3, 2),\:\: 
	(x + 1, -x^3, 2),$$ $$ (x + 1, x^3 + 2, 2),\:\:  (x + 1, -x^3 - 2, 2),\:\:
	(x, x^3 - 1, 2),\:\: (x, -x^3 + 1, 2),\:\:(x, x^3 + 1, 2),$$ 
$$(x, -x^3 - 1, 2), \:\:(x^2 + x, 2x + 1, 2),\:\: 
	(x^2
    + x, -2x - 1, 2)),\:\: (x^2 + x, 1, 2),$$ $$ (x^2 + x, -1, 2), \:\: (x^2 + x + 1, x - 1,
2), \:\: (x^2 + x + 1, -x + 1, 2), $$
	and one can easily conclude that all of the points correspond to the cusps, so we obtain our result.
\end{proof}

We proved the following theorem:

\begin{thm}\label{tm} The possible torsion subgroups of elliptic curves defined over $\mathbb{Q}(\sqrt{17})$ are the following:
$$\mathbb{Z}/n\mathbb{Z}, \:\:\: n=1,...,14,$$
$$\mathbb{Z}/2\mathbb{Z} \oplus \mathbb{Z}/2n\mathbb{Z}, \:\:\: n=1,...,6.$$
\end{thm}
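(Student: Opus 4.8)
The plan is to assemble Theorem \ref{tm} directly from the results already proved in this section, together with Mazur's theorem and the Kamienny--Kenku--Momose classification of torsion subgroups over quadratic fields. First I would invoke the Kamienny--Kenku--Momose theorem to fix the finite list of 26 candidate groups that could possibly occur over any quadratic field, including $\mathbb{Q}(\sqrt{17})$; this reduces the problem to deciding, for each of these 26 groups, whether it is realized over $\mathbb{Q}(\sqrt{17})$.

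Next I would dispatch the groups in three batches. For the groups $\mathbb{Z}/n\mathbb{Z}$ with $n = 1, \dots, 10, 12$ and $\mathbb{Z}/2\mathbb{Z} \oplus \mathbb{Z}/2n\mathbb{Z}$ with $n = 1, \dots, 4$ (those from Mazur's theorem), the argument of Section \ref{sec:two}, via \cite[Lemma 3.5]{DJZ} applied over $K = \mathbb{Q}(\sqrt{17})$ and the genus-zero modular curves $X_1(n)$, $X_1(2,2n)$, shows each occurs (indeed infinitely often). For the three groups $\mathbb{Z}/3\mathbb{Z}\oplus\mathbb{Z}/3\mathbb{Z}$, $\mathbb{Z}/3\mathbb{Z}\oplus\mathbb{Z}/6\mathbb{Z}$, $\mathbb{Z}/4\mathbb{Z}\oplus\mathbb{Z}/4\mathbb{Z}$, the Weil-pairing obstruction from Section \ref{sec:two} rules them out, since $\mathbb{Q}(\zeta_3)=\mathbb{Q}(\sqrt{-3})\not\subset\mathbb{Q}(\sqrt{17})$ and $\mathbb{Q}(i)\not\subset\mathbb{Q}(\sqrt{17})$. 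This leaves exactly the ten remaining groups $\mathbb{Z}/n\mathbb{Z}$ for $n = 11, 13, 14, 15, 16, 18$ and $\mathbb{Z}/2\mathbb{Z}\oplus\mathbb{Z}/2n\mathbb{Z}$ for $n = 5, 6$, each of which is treated in one of the Propositions \ref{prop1}--\ref{tm4} above: the cases $n=11, 14$ and $\mathbb{Z}/2\mathbb{Z}\oplus\mathbb{Z}/10\mathbb{Z}$, $\mathbb{Z}/2\mathbb{Z}\oplus\mathbb{Z}/12\mathbb{Z}$ give (infinitely many) curves, $n=13$ gives finitely many curves, and $n = 15, 16, 18$ are excluded.

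Finally I would combine these conclusions: the groups that occur are precisely $\mathbb{Z}/n\mathbb{Z}$ for $n = 1,\dots,14$ and $\mathbb{Z}/2\mathbb{Z}\oplus\mathbb{Z}/2n\mathbb{Z}$ for $n = 1,\dots,6$, which is exactly the statement of the theorem. The only point requiring a little care is the bookkeeping that the 26-group list of Kamienny--Kenku--Momose partitions cleanly into the three batches above with no omissions or overlaps, so that every candidate has been addressed; there is no real mathematical obstacle, as all the substantive work — the rank and torsion computations on the elliptic and hyperelliptic modular curves — has already been carried out in the preceding propositions. The main ``obstacle,'' such as it is, was completing the hyperelliptic cases $n = 13, 16, 18$, which is why those arguments (quadratic twists to compute the Mordell--Weil rank over $\mathbb{Q}(\sqrt{17})$, enumeration of cusps, and identification of the finitely many rational divisor classes via their Mumford representations) occupy the bulk of the section.
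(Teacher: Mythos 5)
Your proposal is correct and follows essentially the same route as the paper: Theorem \ref{tm} is simply the assembly of the Kamienny--Kenku--Momose list, the genus-zero/Weil-pairing arguments of Section \ref{sec:two}, and Propositions \ref{prop1}--\ref{tm4} for the remaining groups over $\mathbb{Q}(\sqrt{17})$. The only blemish is the trivial miscount ``ten remaining groups'' (your explicit list correctly has eight: $n=11,13,14,15,16,18$ and $\mathbb{Z}/2\mathbb{Z}\oplus\mathbb{Z}/2n\mathbb{Z}$, $n=5,6$), which does not affect the argument.
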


\begin{remark}
	In \cite[Theorem 2.7.7, 2.7.8]{KRUMM} Krumm found a list of possible quadratic fields $\mathbb{Q}(\sqrt{d})$ over which torsion subgroups $\mathbb{Z}/13\mathbb{Z}$ and $\mathbb{Z}/18\mathbb{Z}$ may appear, for $0<d<1000.$
	
	In our case, for $0<d<100,$ there are only two such fields, $\mathbb{Q}(\sqrt{17}),$ over which $\mathbb{Z}/13\mathbb{Z}$ appears, and $\mathbb{Q}(\sqrt{33}),$ over which $\mathbb{Z}/18\mathbb{Z}$ appears.
	
	In this paper we were able to eliminate the rest of the fields for $0<d<100$ using only conditions from \ref{tm1} and \ref{tm2} and methods described in \ref{tm3} and \ref{tm4}.
\end{remark}

\section{Torsion over $\mathbb{Q}(\sqrt{d})$, $0<\lowercase{d}<100$}

For every quadratic field $\mathbb{Q}(\sqrt{d}),$ where $d$ is a non-negative squarefree integer $d\neq 1,$ $0<d<100,$ we found the torsion subgroups appearing by methods similar to the ones described in Theorem \ref{tm}.

Since we know that the groups from the Mazur's theorem appear as torsion subgroups, and the groups $\mathbb{Z}/3\mathbb{Z}\oplus \mathbb{Z}/3n\mathbb{Z}, \: n=1,2, \: \mathbb{Z}/4\mathbb{Z}\oplus \mathbb{Z}/4\mathbb{Z}$ do not appear as torsion subgroups of elliptic curves over mentioned fields, we give a list of possible torsion subgroups within the groups $$\mathbb{Z}/n\mathbb{Z}, \:\: n=11,13,14, 15, 16, 18, $$ $$\mathbb{Z}/2\mathbb{Z}\oplus \mathbb{Z}/2n\mathbb{Z}, n=5,6, \:\: $$ i.e. the rest of the possible 26 torsion subgroups over quadratic fields.

Note that the groups mentioned above appear as a torsion subgroup 37, 1, 38, 37, 6-17, 1, 35, 36 times, respectively, over all $\mathbb{Q}(\sqrt{d})$, where $0<d<100.$

We were unable to determine how many times exactly does the group $ \mathbb{Z}/16\mathbb{Z}$ appear as a torsion subgroup because of the following problem: computing (in Magma) the rank of the Jacobian of the modular curve $X_1(16)$ defined over the problematic quadratic fields listed in the table did not give a result, only the lower and the upper bound that were not the same. That was a problem since it is important to know the mentioned rank in order to know which method to use for determining whether that is a possible torsion subgroup or not.

Also, searching for the points on the modular curve $X_1(16)$ over the same fields did not yield a result.

\begin{table}[]
\centering
\caption{List of all possible torsion groups over quadratic fields $\mathbb{Q}(\sqrt{d}),$ for $0<d<100,$ without the groups from the Mazur's theorem}
\label{my-label}
\begin{tabular}{ll}
\hline
\multicolumn{1}{|l|}{Fields}                  & \multicolumn{1}{l|}{Possible torsion subgroups over a given field$\:\:\:\:\:\:\:\:\:\:\:\:\:\:\:\:\:\:\:\:\:\:\:\:\:\:$}                                                                                                                                               \\ \hline
\multicolumn{1}{|l|}{$\mathbb{Q}(\sqrt{2}) \:\:\:\:\:\:\:\:\:\:\:\:$}  & \multicolumn{1}{l|}{\begin{tabular}[c]{@{}l@{}}$\mathbb{Z}/n\mathbb{Z}, \:\:\: n=11$\end{tabular}}                         \\ \hline
\multicolumn{1}{|l|}{$\mathbb{Q}(\sqrt{3})$}  & \multicolumn{1}{l|}{\begin{tabular}[c]{@{}l@{}}$\mathbb{Z}/n\mathbb{Z}, \:\:\: n=14, 15$\\ $\mathbb{Z}/2\mathbb{Z} \oplus \mathbb{Z}/2n\mathbb{Z}, \:\:\: n=6$\end{tabular}}                 \\ \hline
\multicolumn{1}{|l|}{$\mathbb{Q}(\sqrt{5})$}  & \multicolumn{1}{l|}{\begin{tabular}[c]{@{}l@{}}$\mathbb{Z}/n\mathbb{Z}, \:\:\: n=15$\end{tabular}}                 \\ \hline
\multicolumn{1}{|l|}{$\mathbb{Q}(\sqrt{6})$}  & \multicolumn{1}{l|}{\begin{tabular}[c]{@{}l@{}}$\mathbb{Z}/n\mathbb{Z}, \:\:\: n=11, 14$\\ $\mathbb{Z}/2\mathbb{Z} \oplus \mathbb{Z}/2n\mathbb{Z}, \:\:\: n=5,6$\end{tabular}}                         \\ \hline
\multicolumn{1}{|l|}{$\mathbb{Q}(\sqrt{7})$}  & \multicolumn{1}{l|}{\begin{tabular}[c]{@{}l@{}}$\mathbb{Z}/n\mathbb{Z}, \:\:\: n=11, 14, 15$\\ $\mathbb{Z}/2\mathbb{Z} \oplus \mathbb{Z}/2n\mathbb{Z}, \:\:\: n=6$\end{tabular}}                     \\ \hline
\multicolumn{1}{|l|}{$\mathbb{Q}(\sqrt{10})$} & \multicolumn{1}{l|}{\begin{tabular}[c]{@{}l@{}}$\mathbb{Z}/n\mathbb{Z}, \:\:\: n=11, 14, 15, 16$\\ $\mathbb{Z}/2\mathbb{Z} \oplus \mathbb{Z}/2n\mathbb{Z}, \:\:\: n=5,6$\end{tabular}}                 \\ \hline
\multicolumn{1}{|l|}{$\mathbb{Q}(\sqrt{11})$} & \multicolumn{1}{l|}{\begin{tabular}[c]{@{}l@{}}$\mathbb{Z}/n\mathbb{Z}, \:\:\: n=11, 15$\end{tabular}}                         \\ \hline
\multicolumn{1}{|l|}{$\mathbb{Q}(\sqrt{13})$} & \multicolumn{1}{l|}{\begin{tabular}[c]{@{}l@{}}$\mathbb{Z}/n\mathbb{Z}, \:\:\: n=11, 15$\\ $\mathbb{Z}/2\mathbb{Z} \oplus \mathbb{Z}/2n\mathbb{Z}, \:\:\: n=5,6$\end{tabular}}                         \\ \hline
\multicolumn{1}{|l|}{$\mathbb{Q}(\sqrt{14})$} & \multicolumn{1}{l|}{\begin{tabular}[c]{@{}l@{}}$\mathbb{Z}/n\mathbb{Z}, \:\:\: n=14, 15$\\ $\mathbb{Z}/2\mathbb{Z} \oplus \mathbb{Z}/2n\mathbb{Z}, \:\:\: n=5$\end{tabular}}                 \\ \hline
\multicolumn{1}{|l|}{$\mathbb{Q}(\sqrt{15})$} & \multicolumn{1}{l|}{\begin{tabular}[c]{@{}l@{}}$\mathbb{Z}/n\mathbb{Z}, \:\:\: n=15, 16$\\ $\mathbb{Z}/2\mathbb{Z} \oplus \mathbb{Z}/2n\mathbb{Z}, \:\:\: n=5$\end{tabular}}             \\ \hline
\multicolumn{1}{|l|}{$\mathbb{Q}(\sqrt{17})$} & \multicolumn{1}{l|}{\begin{tabular}[c]{@{}l@{}}$\mathbb{Z}/n\mathbb{Z}, \:\:\: n=11,13,14$\\ $\mathbb{Z}/2\mathbb{Z} \oplus \mathbb{Z}/2n\mathbb{Z}, \:\:\: n=5,6$\end{tabular}}                             \\ \hline
\multicolumn{1}{|l|}{$\mathbb{Q}(\sqrt{19})$} & \multicolumn{1}{l|}{\begin{tabular}[c]{@{}l@{}}$\mathbb{Z}/n\mathbb{Z}, \:\:\: n=11, 14$\\ $\mathbb{Z}/2\mathbb{Z} \oplus \mathbb{Z}/2n\mathbb{Z}, \:\:\: n=5$\end{tabular}}                         \\ \hline
\multicolumn{1}{|l|}{$\mathbb{Q}(\sqrt{21})$} & \multicolumn{1}{l|}{\begin{tabular}[c]{@{}l@{}}$\mathbb{Z}/n\mathbb{Z}, \:\:\: n=11$\\ $\mathbb{Z}/2\mathbb{Z} \oplus \mathbb{Z}/2n\mathbb{Z}, \:\:\: n=6$\end{tabular}}                         \\ \hline
\multicolumn{1}{|l|}{$\mathbb{Q}(\sqrt{22})$} & \multicolumn{1}{l|}{\begin{tabular}[c]{@{}l@{}}$\mathbb{Z}/n\mathbb{Z}, \:\:\: n=11, 14, 15$\\ $\mathbb{Z}/2\mathbb{Z} \oplus \mathbb{Z}/2n\mathbb{Z}, \:\:\: n=6$\end{tabular}}                     \\ \hline
\multicolumn{1}{|l|}{$\mathbb{Q}(\sqrt{23})$} & \multicolumn{1}{l|}{\begin{tabular}[c]{@{}l@{}} $\mathbb{Z}/2\mathbb{Z} \oplus \mathbb{Z}/2n\mathbb{Z}, \:\:\: n=6$\end{tabular}}                     \\ \hline
\multicolumn{1}{|l|}{$\mathbb{Q}(\sqrt{26})$} & \multicolumn{1}{l|}{\begin{tabular}[c]{@{}l@{}}$\mathbb{Z}/n\mathbb{Z}, \:\:\: n=14, 15, \text{ maybe } \mathbb{Z}/16\mathbb{Z}$\\ $\mathbb{Z}/2\mathbb{Z} \oplus \mathbb{Z}/2n\mathbb{Z}, \:\:\: n=5$
\end{tabular}} \\ \hline
\multicolumn{1}{|l|}{$\mathbb{Q}(\sqrt{29})$} & \multicolumn{1}{l|}{\begin{tabular}[c]{@{}l@{}}$\mathbb{Z}/n\mathbb{Z}, \:\:\: n=11, 14, 15$\\ $\mathbb{Z}/2\mathbb{Z} \oplus \mathbb{Z}/2n\mathbb{Z}, \:\:\: n=5$\end{tabular}}                     \\ \hline
\multicolumn{1}{|l|}{$\mathbb{Q}(\sqrt{30})$} & \multicolumn{1}{l|}{\begin{tabular}[c]{@{}l@{}}$\mathbb{Z}/n\mathbb{Z}, \:\:\: n=11, 15$\\ $\mathbb{Z}/2\mathbb{Z} \oplus \mathbb{Z}/2n\mathbb{Z}, \:\:\: n=5,6$\end{tabular}}                     \\ \hline
\multicolumn{1}{|l|}{$\mathbb{Q}(\sqrt{31})$} & \multicolumn{1}{l|}{\begin{tabular}[c]{@{}l@{}}$\mathbb{Z}/n\mathbb{Z}, \:\:\: n=14, \text{ maybe } \mathbb{Z}/16\mathbb{Z} $\\ $\mathbb{Z}/2\mathbb{Z} \oplus \mathbb{Z}/2n\mathbb{Z}, \:\:\: n=5$ 
\end{tabular}}     \\ \hline
\multicolumn{1}{|l|}{$\mathbb{Q}(\sqrt{33})$} & \multicolumn{1}{l|}{\begin{tabular}[c]{@{}l@{}}$\mathbb{Z}/n\mathbb{Z}, \:\:\: n=11, 14, 15, 18$\\ $\mathbb{Z}/2\mathbb{Z} \oplus \mathbb{Z}/2n\mathbb{Z}, \:\:\: n=5$\end{tabular}}                 \\ \hline
\multicolumn{1}{|l|}{$\mathbb{Q}(\sqrt{34})$} & \multicolumn{1}{l|}{\begin{tabular}[c]{@{}l@{}}$\mathbb{Z}/n\mathbb{Z}, \:\:\: n=14, 15$\\ $\mathbb{Z}/2\mathbb{Z} \oplus \mathbb{Z}/2n\mathbb{Z}, \:\:\: n=5,6$\end{tabular}}                 \\ \hline
                                  
\end{tabular}
\end{table}

\begin{table}[]
\centering
\label{my-label}
\begin{tabular}{|l|l|}
\hline
Fields                  & Possible torsion subgroups over a given field$\:\:\:\:\:\:\:\:\:\:\:\:\:\:\:\:\:\:\:\:\:\:\:\:\:\:$                                                                                                                                       \\ \hline
$\mathbb{Q}(\sqrt{35})\:\:\:\:\:\:\:\:\:\:\:\:$ & \begin{tabular}[c]{@{}l@{}}$\mathbb{Z}/n\mathbb{Z}, \:\:\: n=11, 14$\\ $\mathbb{Z}/2\mathbb{Z} \oplus \mathbb{Z}/2n\mathbb{Z}, \:\:\: n=5,6$\end{tabular}                 \\ \hline
$\mathbb{Q}(\sqrt{37})$ & \begin{tabular}[c]{@{}l@{}}$\mathbb{Z}/n\mathbb{Z}, \:\:\: n=14, 15$\\ $\mathbb{Z}/2\mathbb{Z} \oplus \mathbb{Z}/2n\mathbb{Z}, \:\:\: n=5,6$\end{tabular}         \\ \hline
$\mathbb{Q}(\sqrt{38})$ & \begin{tabular}[c]{@{}l@{}}$\mathbb{Z}/n\mathbb{Z}, \:\:\: n=14, 15$\end{tabular}         \\ \hline
$\mathbb{Q}(\sqrt{39})$ & \begin{tabular}[c]{@{}l@{}}$\mathbb{Z}/n\mathbb{Z}, \:\:\: n=11$\\ $\mathbb{Z}/2\mathbb{Z} \oplus \mathbb{Z}/2n\mathbb{Z}, \:\:\: n=5,6$\end{tabular}                 \\ \hline
$\mathbb{Q}(\sqrt{41})$ & \begin{tabular}[c]{@{}l@{}}$\mathbb{Z}/n\mathbb{Z}, \:\:\: n=11, 14, 15, 16$\\ $\mathbb{Z}/2\mathbb{Z} \oplus \mathbb{Z}/2n\mathbb{Z}, \:\:\: n=6$\end{tabular}         \\ \hline
$\mathbb{Q}(\sqrt{42})$ & \begin{tabular}[c]{@{}l@{}}$\mathbb{Z}/n\mathbb{Z}, \:\:\: n=14, 15$\\ $\mathbb{Z}/2\mathbb{Z} \oplus \mathbb{Z}/2n\mathbb{Z}, \:\:\: n=6$\end{tabular}         \\ \hline
$\mathbb{Q}(\sqrt{43})$ & \begin{tabular}[c]{@{}l@{}}$\mathbb{Z}/n\mathbb{Z}, \:\:\: n=11, 15$\end{tabular}             \\ \hline
$\mathbb{Q}(\sqrt{46})$ & \begin{tabular}[c]{@{}l@{}}$\mathbb{Z}/n\mathbb{Z}, \:\:\: n=11$\\ $\mathbb{Z}/2\mathbb{Z} \oplus \mathbb{Z}/2n\mathbb{Z}, \:\:\: n=5,6$\end{tabular}                 \\ \hline
$\mathbb{Q}(\sqrt{47})$ & \begin{tabular}[c]{@{}l@{}}$\mathbb{Z}/n\mathbb{Z}, \:\:\: n=14, \text{ maybe } \mathbb{Z}/16\mathbb{Z}$\\ $\mathbb{Z}/2\mathbb{Z} \oplus \mathbb{Z}/2n\mathbb{Z}, \:\:\: n=6$
\end{tabular}     \\ \hline
$\mathbb{Q}(\sqrt{51})$ & \begin{tabular}[c]{@{}l@{}}$\mathbb{Z}/n\mathbb{Z}, \:\:\: n=11, 14, 16$\\ $\mathbb{Z}/2\mathbb{Z} \oplus \mathbb{Z}/2n\mathbb{Z}, \:\:\: n=5$\end{tabular}             \\ \hline
$\mathbb{Q}(\sqrt{53})$ & \begin{tabular}[c]{@{}l@{}}$\mathbb{Z}/n\mathbb{Z}, \:\:\: n=14$\\ $\mathbb{Z}/2\mathbb{Z} \oplus \mathbb{Z}/2n\mathbb{Z}, \:\:\: n=5$\end{tabular}             \\ \hline
$\mathbb{Q}(\sqrt{55})$ & \begin{tabular}[c]{@{}l@{}}$\mathbb{Z}/n\mathbb{Z}, \:\:\: n=11, 14, 15$\\ $\mathbb{Z}/2\mathbb{Z} \oplus \mathbb{Z}/2n\mathbb{Z}, \:\:\: n=5,6$\end{tabular}             \\ \hline
$\mathbb{Q}(\sqrt{57})$ & \begin{tabular}[c]{@{}l@{}}$\mathbb{Z}/n\mathbb{Z}, \:\:\: n=11, 15$\\ $\mathbb{Z}/2\mathbb{Z} \oplus \mathbb{Z}/2n\mathbb{Z}, \:\:\: n=5$\end{tabular}             \\ \hline
$\mathbb{Q}(\sqrt{58})$ & \begin{tabular}[c]{@{}l@{}}$\mathbb{Z}/n\mathbb{Z}, \:\:\: n=11,15, \text{ maybe } \mathbb{Z}/16\mathbb{Z}$\\ $\mathbb{Z}/2\mathbb{Z} \oplus \mathbb{Z}/2n\mathbb{Z}, \:\:\: n=6$\end{tabular}         \\ \hline
$\mathbb{Q}(\sqrt{59})$ & \begin{tabular}[c]{@{}l@{}}$\mathbb{Z}/n\mathbb{Z}, \:\:\: n=14, 15$\\ $\mathbb{Z}/2\mathbb{Z} \oplus \mathbb{Z}/2n\mathbb{Z}, \:\:\: n=5,6$\end{tabular}         \\ \hline
$\mathbb{Q}(\sqrt{61})$ & \begin{tabular}[c]{@{}l@{}}$\mathbb{Z}/n\mathbb{Z}, \:\:\: n=11$\\ $\mathbb{Z}/2\mathbb{Z} \oplus \mathbb{Z}/2n\mathbb{Z}, \:\:\: n=6$\end{tabular}                 \\ \hline
$\mathbb{Q}(\sqrt{62})$ & \begin{tabular}[c]{@{}l@{}}$\mathbb{Z}/n\mathbb{Z}, \:\:\: n=11, 14, \text{ maybe } \mathbb{Z}/16\mathbb{Z}$ \end{tabular}         \\ \hline
$\mathbb{Q}(\sqrt{65})$ & \begin{tabular}[c]{@{}l@{}}$\mathbb{Z}/n\mathbb{Z}, \:\:\: n=11$\\ $\mathbb{Z}/2\mathbb{Z} \oplus \mathbb{Z}/2n\mathbb{Z}, \:\:\: n=6$\end{tabular}                 \\ \hline
$\mathbb{Q}(\sqrt{66})$ & \begin{tabular}[c]{@{}l@{}}$\mathbb{Z}/n\mathbb{Z}, \:\:\: n=11, 14$\\ $\mathbb{Z}/2\mathbb{Z} \oplus \mathbb{Z}/2n\mathbb{Z}, \:\:\: n=5,6$\end{tabular}                 \\ \hline
$\mathbb{Q}(\sqrt{67})$ & \begin{tabular}[c]{@{}l@{}}$\mathbb{Z}/n\mathbb{Z}, \:\:\: n=15$\end{tabular}         \\ \hline
$\mathbb{Q}(\sqrt{69})$ & \begin{tabular}[c]{@{}l@{}} $\mathbb{Z}/2\mathbb{Z} \oplus \mathbb{Z}/2n\mathbb{Z}, \:\:\: n=6$\end{tabular}             \\ \hline
$\mathbb{Q}(\sqrt{70})$ & \begin{tabular}[c]{@{}l@{}}$\mathbb{Z}/n\mathbb{Z}, \:\:\: n=14, 15, 16$\\ $\mathbb{Z}/2\mathbb{Z} \oplus \mathbb{Z}/2n\mathbb{Z}, \:\:\: n=5,6$\end{tabular} 
\\ \hline
$\mathbb{Q}(\sqrt{71})$ & \begin{tabular}[c]{@{}l@{}}$\mathbb{Z}/n\mathbb{Z}, \:\:\: n=14, 15$\\ $\mathbb{Z}/2\mathbb{Z} \oplus \mathbb{Z}/2n\mathbb{Z}, \:\:\: n=5,6$\end{tabular}         \\ \hline

\end{tabular}
\end{table}

\begin{table}[]
\centering
\label{my-label}
\begin{tabular}{|l|l|}
\hline
Fields                  & Possible torsion subgroups over a given field$\:\:\:\:\:\:\:\:\:\:\:\:\:\:\:\:\:\:\:\:\:\:\:\:\:\:$                                                                                                                                       \\ \hline
$\mathbb{Q}(\sqrt{73})\:\:\:\:\:\:\:\:\:\:\:\:$ & \begin{tabular}[c]{@{}l@{}}$\mathbb{Z}/n\mathbb{Z}, \:\:\: n=11, 14, 15$\\ $\mathbb{Z}/2\mathbb{Z} \oplus \mathbb{Z}/2n\mathbb{Z}, \:\:\: n=5,6$\end{tabular}             \\ \hline
$\mathbb{Q}(\sqrt{74})$ & \begin{tabular}[c]{@{}l@{}}$\mathbb{Z}/n\mathbb{Z}, \:\:\: n=11, 15, \text{ maybe } \mathbb{Z}/16\mathbb{Z}$\\ $\mathbb{Z}/2\mathbb{Z} \oplus \mathbb{Z}/2n\mathbb{Z}, \:\:\: n=5$
\end{tabular}     \\ \hline
$\mathbb{Q}(\sqrt{77})$ & \begin{tabular}[c]{@{}l@{}}$\mathbb{Z}/n\mathbb{Z}, \:\:\: n=11$\\ $\mathbb{Z}/2\mathbb{Z} \oplus \mathbb{Z}/2n\mathbb{Z}, \:\:\: n=5$\end{tabular}                 \\ \hline
$\mathbb{Q}(\sqrt{78})$ & \begin{tabular}[c]{@{}l@{}}$\mathbb{Z}/n\mathbb{Z}, \:\:\: n=15, \text{ maybe } \mathbb{Z}/16\mathbb{Z}$\\ $\mathbb{Z}/2\mathbb{Z} \oplus \mathbb{Z}/2n\mathbb{Z}, \:\:\: n=6$\end{tabular} \\ \hline
$\mathbb{Q}(\sqrt{79})$ & \begin{tabular}[c]{@{}l@{}}$\mathbb{Z}/n\mathbb{Z}, \:\:\: n=11, 14, \text{ maybe } \mathbb{Z}/16\mathbb{Z}$\\  $\mathbb{Z}/2\mathbb{Z} \oplus \mathbb{Z}/2n\mathbb{Z}, \:\:\: n=5$ \end{tabular}         \\ \hline
$\mathbb{Q}(\sqrt{82})$ & \begin{tabular}[c]{@{}l@{}}$\mathbb{Z}/n\mathbb{Z}, \:\:\: n=14, 15, \text{ maybe } \mathbb{Z}/16\mathbb{Z}$\\ $\mathbb{Z}/2\mathbb{Z} \oplus \mathbb{Z}/2n\mathbb{Z}, \:\:\: n=6$ \end{tabular} \\ \hline
$\mathbb{Q}(\sqrt{83})$ & \begin{tabular}[c]{@{}l@{}}$\mathbb{Z}/n\mathbb{Z}, \:\:\: n=11, 14$\\ $\mathbb{Z}/2\mathbb{Z} \oplus \mathbb{Z}/2n\mathbb{Z}, \:\:\: n=6$\end{tabular}                 \\ \hline
$\mathbb{Q}(\sqrt{85})$ & \begin{tabular}[c]{@{}l@{}}$\mathbb{Z}/n\mathbb{Z}, \:\:\: n=11, 14, 15$\\ $\mathbb{Z}/2\mathbb{Z} \oplus \mathbb{Z}/2n\mathbb{Z}, \:\:\: n=6$\end{tabular}             \\ \hline
$\mathbb{Q}(\sqrt{86})$ & \begin{tabular}[c]{@{}l@{}}$\mathbb{Z}/n\mathbb{Z}, \:\:\: n=11, 15$\\ $\mathbb{Z}/2\mathbb{Z} \oplus \mathbb{Z}/2n\mathbb{Z}, \:\:\: n=5$\end{tabular}             \\ \hline
$\mathbb{Q}(\sqrt{87})$ & \begin{tabular}[c]{@{}l@{}}$\mathbb{Z}/n\mathbb{Z}, \:\:\: n=11, 14, 15, \text{ maybe } \mathbb{Z}/16\mathbb{Z}$ \end{tabular}     \\ \hline
$\mathbb{Q}(\sqrt{89})$ & \begin{tabular}[c]{@{}l@{}}$\mathbb{Z}/n\mathbb{Z}, \:\:\: n=14, 15$\\ $\mathbb{Z}/2\mathbb{Z} \oplus \mathbb{Z}/2n\mathbb{Z}, \:\:\: n=5,6$\end{tabular}         \\ \hline
$\mathbb{Q}(\sqrt{91})$ & \begin{tabular}[c]{@{}l@{}}$\mathbb{Z}/n\mathbb{Z}, \:\:\: n=11, 14, 15$\\ $\mathbb{Z}/2\mathbb{Z} \oplus \mathbb{Z}/2n\mathbb{Z}, \:\:\: n=5$\end{tabular}             \\ \hline
$\mathbb{Q}(\sqrt{93})$ & \begin{tabular}[c]{@{}l@{}}$\mathbb{Z}/n\mathbb{Z}, \:\:\: n=14, 15, 16$\\ $\mathbb{Z}/2\mathbb{Z} \oplus \mathbb{Z}/2n\mathbb{Z}, \:\:\: n=5,6$\end{tabular}     \\ \hline
$\mathbb{Q}(\sqrt{94})$ & \begin{tabular}[c]{@{}l@{}}$\mathbb{Z}/n\mathbb{Z}, \:\:\: n=11, 14, \text{ maybe } \mathbb{Z}/16\mathbb{Z}$\\ $\mathbb{Z}/2\mathbb{Z} \oplus \mathbb{Z}/2n\mathbb{Z}, \:\:\: n=5,6$\end{tabular}         \\ \hline
$\mathbb{Q}(\sqrt{95})$ & \begin{tabular}[c]{@{}l@{}}$\mathbb{Z}/n\mathbb{Z}, \:\:\: n=11$\\ $\mathbb{Z}/2\mathbb{Z} \oplus \mathbb{Z}/2n\mathbb{Z}, \:\:\: n=5,6$\end{tabular}                 \\ \hline
$\mathbb{Q}(\sqrt{97})$ & \begin{tabular}[c]{@{}l@{}}$\mathbb{Z}/n\mathbb{Z}, \:\:\: n=14, 15$\\ $\mathbb{Z}/2\mathbb{Z} \oplus \mathbb{Z}/2n\mathbb{Z}, \:\:\: n=5$\end{tabular}         \\ \hline
\end{tabular}
 \\ 
\end{table}

\clearpage
\bibliographystyle{unsrt}

\begin{thebibliography}{}

  
   
\bibitem{HB}
  H. Baaziz,.
  \textit{Equations for the modular curve $X_{1}(N)$ and models of elliptic
curves with torsion points},
   Math. Comp. 79 (2010), 2371-2386.  
   
\bibitem{MAGMA}
  W. Bosma, J. Cannon, C. Playoust
  \textit{The Magma algebra system. I. The
user language},
   J. Symbolic Comput. 24 (1997), 235-265.
   
\bibitem{BBDN}
    J. Bosman, P. Bruin, A. Dujella, F. Najman,
   \textit{Ranks of elliptic curves with prescribed torsion over number fields},
   Int. Math. Res. Not. IMRN 2014 (2014), 2885-2923.
   
\bibitem{CF}
  H. Cohen, G. Frey,
  \textit{Handbook of Elliptic and Hyperelliptic Curve Cryptography},
  Chapman \& Hall/CRC, 2006.

\bibitem{SK}
     S. Kamienny,
  \textit{Torsion points on elliptic curves and q-coefficients of modular
forms},
 Invent. Math. 109 (1992), 221-–229.
 
  \bibitem{SKFN}
  S. Kamienny, F. Najman,
  \textit{Torsion groups of elliptic curves over
quadratic fields},
 Acta. Arith. 152 (2012), 291–-305.
 
 \bibitem{MAKFM}
  M. A. Kenku, F. Momose,
  \textit{Torsion points on elliptic curves defined over
quadratic fields},
 Nagoya Math. J. 109, (1988), 125–-149.
 
\bibitem{KRUMM}
  D. Krumm,
  \textit{Quadratic Points on Modular Curves},
  Doctoral thesis, Athens,
Georgia, 2013.  

\bibitem{BM}
  B. Mazur,
  \textit{Modular curves and the Eisenstein ideal},
  Inst. Hautes Etudes Sci. Publ. Math. 47 (1978), 33-–186.

\bibitem{FN1}
  F. Najman,
  \textit{Complete classification of torsion of elliptic curves
over quadratic cyclotomic fields},
  J. Number Theory 130 (2010), 1964-1968.  
   
\bibitem{FN2}
  F. Najman,
  \textit{Torsion of elliptic curves over quadratic
cyclotomic fields},
  Math. J. Okayama Univ. 53 (2011), 75-82.  
   
\bibitem{FPR}
  F. P. Rabarison,
  \textit{Structure de torsion des courbes elliptiques sur les
corps quadratiques},
   Acta Arith. 144 (2010), 17-52.
  
\bibitem{DJZ}
  D. J. Zywina,
  \textit{On the possible images of the mod $l$ representations associated to elliptic curves over $\mathbb{Q}$},
  	preprint. 
  
\end{thebibliography}

\end{document}